\definecolor{dblue}{rgb}{0,0,.6}
\numberwithin{equation}{section}
\newtheorem{theorem}{Theorem}[section]
\theoremstyle{plain}
\newtheorem{conjecture}[theorem]{Conjecture}
\newtheorem{corollary}[theorem]{Corollary}
\newtheorem{lemma}[theorem]{Lemma}
\newtheorem{remark}[theorem]{Remark}
\newcommand{\del}{\partial}
\newcommand{\Z}{\mathbb Z}
\newcommand{\Q}{\mathbb Q}
\newcommand{\C}{\mathbb C}
\newcommand{\R}{\operatorname{R}}
\newcommand{\HH}{H}
\newcommand{\A}{A}
\newcommand{\Co}{K}
\newcommand{\im}{\operatorname{im}}
\newcommand{\Hom}{\operatorname{Hom}}
\newcommand{\id}{\operatorname{id}}
\newcommand{\Spec}{\operatorname{Spec}}
\newcommand{\codim}{\operatorname{codim}}
\newcommand{\Br}{\operatorname{Br}}
\newcommand{\CH}{\operatorname{CH}}
\newcommand{\cl}{\operatorname{cl}}
\newcommand{\Char}{\operatorname{char}}
\newcommand{\et}{\text{\'et}}
\newcommand{\zar}{\text{Zar}}
\newcommand{\proet}{\text{pro\'et}}
\newcommand{\colim}{\operatorname{colim}}
\newcommand{\an}{\operatorname{an}}
\newcommand{\DM}{\operatorname{DM}}
\newcommand{\h}{\operatorname{h}}
\newcommand{\dashedlongrightarrow}{\xymatrix@1@=15pt{\ar@{-->}[r]&}}
\renewcommand{\longrightarrow}{\xymatrix@1@=15pt{\ar[r]&}}
\renewcommand{\mapsto}{\xymatrix@1@=15pt{\ar@{|->}[r]&}}
\renewcommand{\twoheadrightarrow}{\xymatrix@1@=15pt{\ar@{->>}[r]&}}
\newcommand{\hooklongrightarrow}{\xymatrix@1@=15pt{\ar@{^(->}[r]&}}
\newcommand{\congpf}{\xymatrix@1@=15pt{\ar[r]^-\sim&}}
\renewcommand{\cong}{\simeq}
\begin{document}

\title[Truncated pushforwards and refined unramified cohomology]{Truncated pushforwards and refined unramified cohomology}

\author{Theodosis Alexandrou} 
\address{Institute of Algebraic Geometry, Leibniz University Hannover, Welfengarten 1, 30167 Hannover , Germany.}
\email{alexandrou@math.uni-hannover.de} 

\author{Stefan Schreieder} 
\address{Institute of Algebraic Geometry, Leibniz University Hannover, Welfengarten 1, 30167 Hannover , Germany.}
\email{schreieder@math.uni-hannover.de}

\date{\today} 
\subjclass[2020]{primary 14C15, 14C25, secondary 14F20}
%

\keywords{algebraic cycles, motivic cohomology, unramified cohomology, Gersten conjecture}

 \begin{abstract}  
For a large class of cohomology theories, we prove that refined unramified cohomology is canonically isomorphic to the hypercohomology of a natural truncated complex of Zariski sheaves. 
This generalizes a classical result of Bloch and Ogus and solves a conjecture of Kok and Zhou. 
 \end{abstract}

\maketitle

\section{Introduction}

Let $X$ be a smooth variety over a field $k$.
We consider the natural map $\pi:X_{\et}\to X_{\zar}$ between the \'etale site and the Zariski site of $X$.
Let $m$ be an integer invertible in $k$.
Then the total pushforward $\R \pi_\ast  \mu_m^{\otimes n}\in D^b(X_\zar)$ is known to capture the motivic cohomology of $X$ with values in $\Z/m$.
Indeed, as a consequence of the Bloch--Kato conjecture, proven by Voevodsky \cite{Voevodsky}, we have 
\begin{align}\label{eq:H_motivic}
H^i_M(X,\Z/m(n))\cong H^i(X,\tau_{\leq n} \R \pi_\ast  \mu_m^{\otimes n}),
\end{align}
see \cite[Corollary 1.2]{geisser-levine}.
Since $H^i_M(X,\Z/m(n))\cong \CH^{n}(X,2n-i,\Z/m)$ agrees with Bloch's higher Chow groups (with finite coefficients), it also follows that the Zariski cohomology of the truncated complex $\tau_{\leq n} \R \pi_\ast  \mu_m^{\otimes n}$ has an explicit geometric meaning in terms of algebraic cycles on $X\times \Delta ^q$.
Similar results hold for $m=p^r$, when $k$ is a perfect field of characteristic $p$, see \cite{geisser-levine-inventiones}.
If $k$ contains a primitive $m$-th root of unity, then $\mu_m^{\otimes n}$ can be replaced by $\Z/m$ in this discussion.

It is natural to wonder whether the hypercohomology of ``the other'' truncation $\tau_{\geq s} \R\pi_\ast \mu_m^{\otimes n}$ admits a natural geometric description as well.
In a recent paper, Kok and Zhou \cite{kok-zhou} found the following conjectural answer to this question.

\begin{conjecture}[{\cite[Conjecture 1.4]{kok-zhou}}] \label{conj:kok-zhou}
Let $X$ be a smooth variety over a field $k$ and let $A(n)$ be a (twisted) locally constant torsion \'etale sheaf in which the exponential characteristic of $k$ is invertible.
Then there are canonical isomorphisms
$$
H^i(X,\tau_{\geq s}\R \pi_\ast A(n) )\cong H^i_{i-s,nr}(X,A(n)) ,
$$
where the right hand side denotes the refined unramified cohomology groups from \cite{Sch-refined} and
where $\pi:X_{\et}\to X_{\zar}$ denotes the canonical map.
Moreover, if $k=\C$ and $X_{\et}$ is replaced by the analytic site $X_{\an}$, then $A(n)$ can be replaced by any abelian group.
\end{conjecture}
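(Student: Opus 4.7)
The plan is to identify both sides with a common model built from the Cousin--Gersten resolution of $\R\pi_\ast A(n)$, using the Bloch--Ogus theorem in appropriate generality.

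First, under the stated hypotheses, the Bloch--Ogus theorem (and its extensions to the ``large class of cohomology theories'' promised in the abstract) should provide a canonical quasi-isomorphism $\R\pi_\ast A(n)\simeq \mathcal{C}^{\bullet,\bullet}$ in $D^+(X_{\zar})$, where $\mathcal{C}^{\bullet,\bullet}$ is the Cousin double complex with terms
\[
\mathcal{C}^{p,q} \;=\; \bigoplus_{x\in X^{(p)}}(i_x)_\ast H^q_x\bigl(X_{\et}, A(n)\bigr)\;\cong\;\bigoplus_{x\in X^{(p)}}(i_x)_\ast H^{q-p}\bigl(\kappa(x), A(n-p)\bigr),
\]
the second identification coming from absolute purity. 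The horizontal rows recover the classical Gersten resolutions of the Zariski sheaves $\mathcal{H}^q(A(n))$, and the total complex represents $\R\pi_\ast A(n)$.

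Second, I would read off a model for $\tau_{\geq s}\R\pi_\ast A(n)$ by truncating $\mathcal{C}^{\bullet,\bullet}$ in the local-cohomology direction at $q\geq s$. The associated hypercohomology spectral sequence has $E_1$-terms
\[
E_1^{p,q} \;=\; \bigoplus_{x\in X^{(p)}} H^{q-p}\bigl(\kappa(x), A(n-p)\bigr)\quad\text{for }q\geq s,
\]
and vanishes for $q<s$, converging to $H^{p+q}(X,\tau_{\geq s}\R\pi_\ast A(n))$. This expresses $H^i(X,\tau_{\geq s}\R\pi_\ast A(n))$ as assembled from Gersten-type data on codimension-$p$ points with $p+q=i$ and $p\leq i-s$.

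Third, I would compare these data with refined unramified cohomology. By the definition in \cite{Sch-refined}, the group $H^i_{j,nr}(X, A(n))$ is extracted from the very same Gersten-type data restricted to points of codimension at most $j$; taking $j=i-s$, the defining complex coincides with the truncated Cousin complex of Step~2. The main technical point---and likely the heart of the paper---is to promote this coincidence of graded pieces into a canonical isomorphism, which requires constructing $\mathcal{C}^{\bullet,\bullet}$ functorially enough to simultaneously carry the derived-category truncation $\tau_{\geq s}$ and the pro-system of restrictions $X\setminus W\subset X$ (with $W$ of growing codimension) used in defining $H^i_{j,nr}$. Once such a functorial model is in place, the comparison becomes tautological. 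The complex-analytic statement follows verbatim, substituting $\pi\colon X_{\an}\to X_{\zar}$ for $\pi\colon X_{\et}\to X_{\zar}$ and invoking the classical Bloch--Ogus theorem for singular cohomology with arbitrary abelian coefficients.
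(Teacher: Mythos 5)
Your proposal correctly identifies the key input --- the Gersten resolutions of the Zariski sheaves $\R^q\pi_\ast A(n)$ that exist by Bloch--Ogus / Colliot-Th\'el\`ene--Hoobler--Kahn --- but the comparison you sketch in Step~3 is not ``tautological,'' and the gap you flag at the end is in fact the whole argument. Two specific problems:

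First, you assert a canonical quasi-isomorphism $\R\pi_\ast A(n)\simeq \mathcal{C}^{\bullet,\bullet}$ with the Cousin double complex. This is not automatic: Gersten gives resolutions of the individual cohomology sheaves $\mathcal{H}^q(\R\pi_\ast A(n))$, but assembling them into a bicomplex whose total complex models $\R\pi_\ast A(n)$ (a Cartan--Eilenberg-type statement) needs to be built, and building it compatibly with the pro-system $\{X\setminus W\}$ is exactly what you defer. The paper sidesteps this entirely by never constructing such a global double-complex model; it only uses the Gersten resolution of one cohomology sheaf at a time, fed into hypercohomology spectral sequences.

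Second, and more fundamentally, you claim that $H^i_{j,nr}(X,A(n))$ is ``extracted from the very same Gersten-type data'' so that the truncation agrees with the ``defining complex'' of refined unramified cohomology. But refined unramified cohomology is defined as $\im\bigl(H^i(F_{j+1}X,-)\to H^i(F_jX,-)\bigr)$, not as the cohomology of a Gersten-type complex; the identification of the two is a theorem, not a definition. The paper's actual proof of that identification is split into several nontrivial steps that your sketch does not reproduce: (a) the restriction maps $H^i(F_{j+1}X,\tau_{\geq s}K^\bullet)\to H^i(F_jX,\tau_{\geq s}K^\bullet)$ stabilize for $j>i-s$ and are injective at $j=i-s$ (proved via localization sequences for local cohomology with support plus a vanishing coming from the Gersten complex being concentrated in nonnegative degrees); (b) $H^i(F_jX,\tau_{\leq s}K^\bullet)=0$ for $j<i-s$ (proved from the observation that the Gersten complex restricted to $F_jX$ lives in degrees $\leq j$); and (c) the comparison of $H^i(F_jX,K^\bullet)$ with $H^i(F_jX,\tau_{\geq s}K^\bullet)$ via the localization triangle. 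Steps (a) and (b) carry the real content, and neither follows from saying the graded pieces ``coincide.'' So the proposal is aimed in the right direction but stops exactly where the paper's work begins.
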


We recall that the refined unramified cohomology groups are defined by
$$
H^i_{j,nr}(X,A(n))=\im ( H^i(F_{j+1}X,A(n))\to H^i(F_jX,A(n))), 
$$
where $F_jX=\{x\in X\mid \codim (x)\leq j\}$ and $H^i(F_jX,A(n))=\colim_{U\supset F_jX} H^i(U,A(n))$. 
Equivalently, $H^i_{j,nr}(X,A(n))$ is the subgroup of $H^i(F_jX,A(n))$, given by those classes that have trivial residues at all codimension $j+1$ points, see \cite[Lemma 5.8]{Sch-refined}.
In particular, for $j=0$, this definition naturally recovers classical unramified cohomology,  see \cite[Theorem 4.1.1(a)]{CT} and \cite{CTO}. 
Other special cases are ordinary cohomology (for $j\geq \dim X$) or Kato homology, see \cite[\S 1.3]{Sch-refined}.

The case $i=s$ of Conjecture \ref{conj:kok-zhou} implies by the hypercohomology spectral sequence the isomorphism
$$
H^i_{0,nr}(X,A(n))\cong H^0(X_{\zar},\R^i\pi_\ast A(n)).
$$
This is a celebrated result of  Bloch and Ogus \cite{BO}, which identifies the unramified cohomology of a smooth variety by the global sections of a certain Zariski sheaf.
Conjecture \ref{conj:kok-zhou} may be seen as a generalization of this result.
This has, as we shall discuss below, various interesting consequences.

The purpose of this paper is to prove (a generalization of) Conjecture \ref{conj:kok-zhou}.  
To state our result, consider the following examples of a Grothendieck topology $\nu$ on a $k$-scheme $X$ and a complex $K^\bullet\in D(X_\nu,\Z)$ of sheaves of abelian groups on $X_\nu$: 
\begin{enumerate}
\item \label{item:1:etale-coho} $k$ any field, $\nu=\et$ the small \'etale site of $X$, and $K^\bullet$ the pullback of a bounded below complex of (arbitrary) \'etale sheaves on $\Spec k$. 
\item  \label{item:2:Betti-coho} $k=\C$, $\nu=\an$ the analytic site on $X(\C)$ and $K^\bullet$ any bounded below complex of constant sheaves of abelian groups. 
\item \label{item:4:algebraic-de-Rham} $k$ any field of characteristic zero, $\nu=\zar$ the Zariski site on $X $ and $K^\bullet=\Omega^\bullet_{X/k}$ the algebraic de Rham complex of $X$ over $k$.
\item  \label{item:3:log-de-Rham-Witt} $k$ any perfect field of characteristic $p>0$, $\nu=\et$ the small \'etale site and $K^\bullet$ (some shift of) the logarithmic de Rham Witt sheaf $W_r\Omega_{X,\log}^n$, see \cite{illusie}. 
\item \label{item:7:de-Rham-Witt} $k$ any perfect field of characteristic $p>0$, $\nu=\zar$ the small Zariski site and $K^{\bullet}$ the de Rham Witt complex $W\Omega^{\bullet}_{X}:=\varprojlim W_{n}\Omega^{\bullet}_{X}$ of $X$, see \cite{illusie}. 
\item \label{item:5:proetal} $k$ any field, $\nu=\proet$ the small pro-\'etale site of Bhatt--Scholze and $K^\bullet$ the pullback of a constructible complex in $D_{\operatorname{cons}}((\Spec k)_{\proet},\widehat \Z_\ell)$, see \cite{BS}.
\item \label{item:6:motivic-cohomology} $k$ a perfect field, $\nu=\zar$ the Zariski site  and 
$$
K^\bullet:=A_{X}(n)_{\zar}:=z^{n}(-_{\zar},\bullet)[-2n]\otimes^{\mathbb{L}}A
$$ 
Bloch's cycle complex with values in an abelian group $A$, see \cite{bloch-motivic}. 
\item \label{item:7:etale-motivic-cohomology} $k$ any field, $\nu=\et$ the \'etale site, $A$ an abelian group in which the characteristic exponent $p$ of $k$ is invertible and 
$$
K^\bullet:=A_{X}(n)_{\et}:=z^{n}(-_{\et},\bullet)[-2n]\otimes^{\mathbb{L}}A
$$
the \'etale sheafification of Bloch's cycle complex with values in $A$.
\end{enumerate}

\begin{theorem} \label{thm:main-intro}
Let $X$ be a smooth equi-dimensional algebraic $k$-scheme.
Let $\nu$ be a Grothendieck topology on $X$ that contains all Zariski open covers and let $K^\bullet\in D(X_\nu,\Z)$ be a complex of abelian sheaves as in examples \eqref{item:1:etale-coho}--\eqref{item:7:etale-motivic-cohomology} above. 
Let $\pi:X_\nu\to X_{\zar}$ be the natural morphism of sites. 
Then for all integers $i,s,$ there is a canonical isomorphism: 
\begin{equation*} 
\HH^{i}(X_{\zar},\tau_{\geq s}\R\pi_{\ast}K^{\bullet})\cong \HH^{i}_{i-s,nr}(X_{\nu},K^{\bullet}),
\end{equation*} 
where  $\HH^{i}_{j,nr}(X_{\nu},K^{\bullet}):=\im(\HH^{i}(F_{j+1}X,K^{\bullet})\to \HH^{i}(F_{j}X,K^{\bullet}))$ and $\HH^{i}(F_{j}X,K^{\bullet})=\colim_{U\supset F_jX} H^i(U,K^{\bullet})$. 
\end{theorem}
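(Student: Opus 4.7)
The plan is to express both sides of the isomorphism in terms of the Gersten--Cousin resolution of the cohomology sheaves $\mathcal{H}^{q}:=R^{q}\pi_{\ast}K^{\bullet}$. For each of the examples \ref{item:1:etale-coho}--\ref{item:7:etale-motivic-cohomology}, the sheaves $\mathcal{H}^{q}$ satisfy purity, effaceability and homotopy invariance: classically by Bloch--Ogus in cases \ref{item:1:etale-coho}, \ref{item:2:Betti-coho}, \ref{item:4:algebraic-de-Rham}, \ref{item:5:proetal}; via Gros and collaborators in the (log) de Rham--Witt cases \ref{item:3:log-de-Rham-Witt}, \ref{item:7:de-Rham-Witt}; via Bloch in the motivic case \ref{item:6:motivic-cohomology}; and via Geisser--Levine in \ref{item:7:etale-motivic-cohomology}. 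One therefore has canonical flasque Gersten--Cousin resolutions $\mathcal{H}^{q}\hookrightarrow\mathcal{C}^{\bullet,q}$ whose terms $\mathcal{C}^{p,q}$ are direct sums of skyscrapers at codimension-$p$ points. Combining these with a Cartan--Eilenberg-like lift of the differentials in a chosen chain-level representative of $R\pi_{\ast}K^{\bullet}$ produces a flasque double complex $\mathcal{C}^{\bullet,\bullet}$ on $X_{\zar}$ whose total complex represents $R\pi_{\ast}K^{\bullet}$ in $D(X_{\zar})$.

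The key geometric input is the support property of the Cousin complex: since $\mathcal{C}^{p,q}$ is supported at codimension-$p$ points, one has $\Gamma(F_{j}X,\mathcal{C}^{p,q})=\Gamma(X,\mathcal{C}^{p,q})$ for $p\le j$ and $\Gamma(F_{j}X,\mathcal{C}^{p,q})=0$ for $p>j$. Consequently $\HH^{i}(F_{j}X,K^{\bullet})$ is computed as the $i$-th cohomology of $\mathrm{Tot}\bigl(\sigma_{\le j}^{(p)}\Gamma(X,\mathcal{C}^{\bullet,\bullet})\bigr)$, where $\sigma_{\le j}^{(p)}$ is the stupid truncation in the $p$-direction, and the restriction $\HH^{i}(F_{j+1}X,K^{\bullet})\to \HH^{i}(F_{j}X,K^{\bullet})$ is induced by the chain-level projection $\sigma_{\le j+1}^{(p)}\to\sigma_{\le j}^{(p)}$. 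Dually, $\tau_{\ge s}R\pi_{\ast}K^{\bullet}$ is modelled by $\sigma_{\ge s}^{(q)}\mathcal{C}^{\bullet,\bullet}$, so that $\HH^{i}(X_{\zar},\tau_{\ge s}R\pi_{\ast}K^{\bullet})$ is the $i$-th cohomology of $\mathrm{Tot}\bigl(\sigma_{\ge s}^{(q)}\Gamma(X,\mathcal{C}^{\bullet,\bullet})\bigr)$.

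The remaining step is a bidegree-matching argument. For $j=i-s$, the conditions $p\le j$ and $q\ge s$ coincide on the anti-diagonal $p+q=i$, so the nonzero terms contributing to total degree $i$ agree between the two truncations $\sigma_{\le j}^{(p)}$ and $\sigma_{\ge s}^{(q)}$. A direct chain-level analysis of cycles and boundaries in $\Gamma(X,\mathcal{C}^{\bullet,\bullet})$ then shows that a class $\alpha\in \HH^{i}(\sigma_{\le j}^{(p)})$ lifts to $\HH^{i}(\sigma_{\le j+1}^{(p)})$ if and only if it admits a representative supported in $\sigma_{\ge s}^{(q)}$, yielding the claimed isomorphism.

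The main obstacle, in my view, is reconciling the two truncations (coniveau in the $p$-direction versus Postnikov in the $q$-direction) at the chain level, particularly tracking how the Gersten differentials and the vertical differentials of the chosen model of $R\pi_{\ast}K^{\bullet}$ interact along the boundary bidegree $(j+1,s-1)$, where the residue map encodes the obstruction to lifting from $F_{j}X$ to $F_{j+1}X$. Establishing this uniformly for all of the cohomology theories \ref{item:1:etale-coho}--\ref{item:7:etale-motivic-cohomology} is the technical heart of the proof, abstracted by working throughout with the Gersten--Cousin double complex.
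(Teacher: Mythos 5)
Your proposal correctly identifies the two truncations that must be reconciled and the bidegree match $p\le j=i-s$, $q\ge s$ on the anti-diagonal $p+q=i$. It also rightly places the Gersten resolution at the heart of the matter. However, there is a genuine gap at the very first step, and this gap is precisely what the paper's argument is designed to circumvent.

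The gap is the claim that the individual Gersten resolutions $\mathcal{H}^{q}(R\pi_{\ast}K^{\bullet})\hookrightarrow\mathcal{C}^{\bullet,q}$ can be assembled, via a ``Cartan--Eilenberg-like lift of the differentials,'' into a flasque double complex $\mathcal{C}^{\bullet,\bullet}$ whose total complex represents $R\pi_{\ast}K^{\bullet}$. A Cartan--Eilenberg resolution requires injective objects in order to solve the relevant lifting problems; the Gersten resolutions are flasque and supported in prescribed codimensions, but they are not injective, and there is no functorial horseshoe lemma for them. If one instead tries to produce $\mathcal{C}^{\bullet,\bullet}$ from the coniveau filtration on a K-injective resolution of $R\pi_{\ast}K^{\bullet}$, one obtains a filtered complex and hence a spectral sequence (the coniveau spectral sequence of Bloch--Ogus), but the passage from a filtered complex to an actual double complex of this shape is exactly the kind of rectification that is not available in general. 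Your subsequent steps (the identity $\Gamma(F_{j}X,\mathcal{C}^{p,q})=\Gamma(X,\mathcal{C}^{p,q})$ for $p\le j$ and $0$ for $p>j$; the identification of $\sigma^{(q)}_{\ge s}\mathcal{C}^{\bullet,\bullet}$ with $\tau_{\ge s}R\pi_{\ast}K^{\bullet}$ when the rows resolve the cohomology sheaves) are fine \emph{conditional} on the double complex, but the object itself is not shown to exist. A second, smaller issue: your final ``bidegree-matching'' step only controls the anti-diagonal $p+q=i$; the cohomology of the total complex also depends on the neighbouring anti-diagonals, and the ``direct chain-level analysis'' you invoke to compare lifts to $F_{j+1}X$ with representatives in $\sigma^{(q)}_{\ge s}$ is left unargued.

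The paper avoids the double complex entirely. It works with $K^{\bullet}:=R\pi_{\ast}K^{\bullet}$ in $D(X_{\zar})$ and assumes only that each cohomology sheaf $\mathcal{H}^{q}(K^{\bullet})$ admits a Gersten resolution $\mathcal{E}^{q,\bullet}_{X}$; these resolutions are used one at a time to compute $E_{2}$-terms of hypercohomology spectral sequences, never glued into a bicomplex. Concretely, the paper proves (i) a vanishing lemma for local cohomology $H^{p}_{x}(X,\mathcal{H}^{q})$ which shows the restriction $H^{i}(F_{j+1}X,\tau_{\ge s}K^{\bullet})\to H^{i}(F_{j}X,\tau_{\ge s}K^{\bullet})$ is an isomorphism for $j>i-s$ and injective for $j=i-s$, hence $H^{i}(X,\tau_{\ge s}K^{\bullet})\cong H^{i}_{i-s,nr}(X,\tau_{\ge s}K^{\bullet})$; and (ii) a vanishing lemma $H^{i}(F_{j}X,\tau_{\le s-1}K^{\bullet})=0$ for $j<i-s$, which forces $H^{i}(F_{j}X,K^{\bullet})\to H^{i}(F_{j}X,\tau_{\ge s}K^{\bullet})$ to be an isomorphism for $j\le i-s$ and a surjection for $j=i-s+1$. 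A commutative square then identifies $H^{i}_{i-s,nr}(X,K^{\bullet})$ with $H^{i}_{i-s,nr}(X,\tau_{\ge s}K^{\bullet})$, finishing the proof. If you want to salvage your chain-level picture, you should replace the bicomplex by these two spectral-sequence arguments applied directly to $\tau_{\ge s}K^{\bullet}$ and $\tau_{\le s-1}K^{\bullet}$; that is exactly what the support constraints in your $\mathcal{C}^{p,q}$ are secretly being used for.
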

 
Theorem \ref{thm:main-intro} applied to (special cases of) examples \eqref{item:1:etale-coho} and \eqref{item:2:Betti-coho} proves Conjecture \ref{conj:kok-zhou}.

For a large class of site theoretic cohomology theories, the theorem yields an explicit geometric interpretation of the Zariski cohomology $H^i(X,\tau_{\geq s}\R \pi_\ast K^\bullet )$ of the truncated pushforward $\tau_{\geq s}\R \pi_\ast K^\bullet $ via (unramified) cohomology classes on some open subsets of $X$, thereby complementing the aforementioned description of $H^i(X,\tau_{\leq n}\R \pi_\ast \mu_m^{\otimes n} )$ in terms of higher Chow groups with finite coefficients.  
%
In particular, we get the following, where 
 $$ 
 H^i_M(X,\Z/m(n)):=H^i(X_{\zar},(\Z/m)_{X}(n)_{\zar})\ \ \ \text{and}\ \ \ H^i_L(X,\Z/m(n)):=H^i(X_{\et},(\Z/m)_{X}(n)_{\et})
 $$ 
 denote  motivic and \'etale motivic (or Lichtenbaum) cohomology, respectively. 

  \begin{corollary} \label{cor:motivic}
Let $X$ be a smooth equi-dimensional algebraic scheme over a perfect field $k$ and let $m$ be an arbitrary positive integer.
Then there is a natural long exact sequence
$$
\cdots \to H^i_M(X,\Z/m(n)) \stackrel{\cl}\to H^i_L(X,\Z/m(n))\to H^{i}_{i-n-1,nr}(X_{\et}, (\Z/m)_{X}(n)_{\et})\to H^{i+1}_M(X,\Z/m(n)) \stackrel{\cl}\to \cdots .
$$
\end{corollary}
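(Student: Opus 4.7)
The plan is to derive the long exact sequence from the canonical truncation triangle applied to $\R\pi_\ast (\Z/m)_X(n)_\et$, identifying the three resulting terms via known comparison theorems and via Theorem~\ref{thm:main-intro}.

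Let $p$ denote the characteristic exponent of $k$, and write $m = m' p^r$ with $\gcd(m',p) = 1$. The splitting $\Z/m \cong \Z/m' \oplus \Z/p^r$ reduces the argument to treating the prime-to-$p$ and $p$-primary cases separately. The key input in both cases is the truncation identification
$$
(\Z/m)_X(n)_\zar \xrightarrow{\sim} \tau_{\leq n}\R\pi_\ast (\Z/m)_X(n)_\et,
$$
induced by the canonical morphism $\pi^{\ast}(\Z/m)_X(n)_\zar \to (\Z/m)_X(n)_\et$ and its adjoint. For the prime-to-$p$ part this is the Beilinson--Lichtenbaum conjecture, equivalent to the Bloch--Kato conjecture proved by Voevodsky \cite{Voevodsky} (see \cite[Corollary 1.2]{geisser-levine}). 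For the $p$-primary part, both sides identify with $W_r\Omega^n_{X,\log}[-n]$ by Geisser--Levine \cite{geisser-levine-inventiones}.

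Applying $\R\Gamma(X_\zar,-)$ to the distinguished truncation triangle
$$
\tau_{\leq n}\R\pi_\ast (\Z/m)_X(n)_\et \to \R\pi_\ast (\Z/m)_X(n)_\et \to \tau_{\geq n+1}\R\pi_\ast (\Z/m)_X(n)_\et \xrightarrow{+1}
$$
then yields a long exact sequence whose first term is $H^i_M(X,\Z/m(n))$ (by the truncation identification), whose middle term is $H^i_L(X,\Z/m(n))$ (by definition), and whose connecting morphism is by construction the cycle class map $\cl$. Theorem~\ref{thm:main-intro} with $s = n+1$ identifies the third term with refined unramified cohomology: for the prime-to-$p$ part directly via example~\eqref{item:7:etale-motivic-cohomology}, and for the $p$-primary part via example~\eqref{item:3:log-de-Rham-Witt} combined with the quasi-isomorphism $(\Z/p^r)_X(n)_\et \simeq W_r\Omega^n_{X,\log}[-n]$ (the refined unramified cohomology groups depending only on the quasi-isomorphism class of the complex). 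This yields
$$
H^i(X_\zar, \tau_{\geq n+1}\R\pi_\ast (\Z/m)_X(n)_\et) \cong H^i_{i-n-1,nr}(X_\et, (\Z/m)_X(n)_\et),
$$
which is the desired identification.

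The main substance of the argument lies in the truncation identification, which rests on the deep Beilinson--Lichtenbaum and Geisser--Levine theorems; the remaining steps are formal consequences of Theorem~\ref{thm:main-intro} applied to the cofiber of the cycle map.
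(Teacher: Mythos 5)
Your proposal is correct and follows essentially the same route as the paper: truncation triangle for $\R\pi_\ast(\Z/m)_X(n)_\et$, identification of $\tau_{\leq n}$ with $(\Z/m)_X(n)_\zar$ via Beilinson--Lichtenbaum / Geisser--Levine, and identification of the $\tau_{\geq n+1}$ term with refined unramified cohomology via the main theorem. The only cosmetic differences are that the paper applies Theorem~\ref{thm:main} directly to $(\Z/m)_X(n)_\et$ for general $m$ (using CRT only to verify the Gersten hypotheses), whereas you split into $m'$- and $p^r$-parts upfront and invoke Theorem~\ref{thm:main-intro} via examples \eqref{item:7:etale-motivic-cohomology} and \eqref{item:3:log-de-Rham-Witt} separately; and the map $H^i_M\to H^i_L$ you label the ``connecting morphism'' is actually the change-of-topology map in degree $i$, not the boundary map of the triangle.
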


 By work of Geisser--Levine \cite{geisser-levine-inventiones,geisser-levine}, we have canonical quasi-isomorphisms
\begin{align*} 
(\Z/m)_{X}(n)_{\et}\cong \begin{cases}
\mu_m^{\otimes n}\ \ \ \ &\text{if $m$ is coprime to $\operatorname{char}(k)$;}\\
 W_r\Omega_{X,\log}^n[-n]   \ \ \ \ &\text{if $m=p^r$ and $p=\operatorname{char}(k)>0$.}
\end{cases} 
\end{align*}
Hence the groups $H^i_L(X,\Z/m(n))$ identify to \'etale cohomology and logarithmic de Rham Witt cohomology, respectively, and the change of topology map $\cl$ may be identified with a natural cycle class map for motivic cohomology (resp.\ higher Chow groups) with finite coefficients.
Corollary \ref{cor:motivic} shows that the kernel and cokernel of these cycle class maps are controlled by refined unramified cohomology.
The result was proven via different arguments for quasi-projective varieties and with
$m$ coprime to the characteristic by Kok and Zhou \cite{kok-zhou} (which partly motivated their Conjecture \ref{conj:kok-zhou}). 

The following consequence of Theorem \ref{thm:main-intro} can be regarded as an integral version of Corollary \ref{cor:motivic}. 

\begin{corollary}\label{cor:motivic-integral} 
Let $X$ be a smooth and equi-dimensional algebraic scheme over a perfect field $k$. Then there is a natural long exact sequence:\begin{equation}\label{eq:les-integral}
    \cdots\longrightarrow H^{j}_{M}(X,\Z(i))\longrightarrow H^{j}_{L}(X,\Z(i))\longrightarrow H^{j-1}_{j-i-2,nr}(X_{\et},(\Q/\Z)_{X}(i)_{\et})\longrightarrow  H^{j+1}_{M}(X,\Z(i))\longrightarrow \cdots,
\end{equation}
where $(\Q/\Z)_{X}(i)_{\et}$ is Bloch's cycle complex \cite{bloch-motivic} on $X_{\et}$ with values in $\Q/\Z$.
\end{corollary}

We refer to Corollaries \ref{cor:simple-version-motivic-integral} and \ref{cor:consequence-integral-les} below where direct consequences of Corollary \ref{cor:motivic-integral} are discussed. 
The special cases of the above corollary that concern classical unramified cohomology in degree 3 and 4 have previously been proven in \cite[Theorem 1.1]{kahn96} and \cite[Remarques 2.10 (2)]{kahn}, respectively (see also \cite[Proposition 2.9]{kahn}, \cite[(1.5)]{CT-kahn}, and \cite[p.\ 531, Remark 5.2 (b)]{RS-JIMJ}).

Away from the characteristic, refined unramified cohomology was previously known to be closely related to algebraic cycles \cite{Sch-refined}, generalizing various previous results for ordinary unramified cohomology and cycles of low (co-)dimensions from \cite{CTV,kahn,Voi-unramified,Ma}.
Comparison results between algebraic cycles and
 refined unramified cohomology, together with the fact that the latter can be represented by explicit cohomology classes on certain open subsets of $X$, have already seen various applications, for instance to prove nontriviality of torsion classes in Griffiths groups \cite{Sch-griffiths,Ale-griffiths}, to prove non-algebraicity of certain Tate and Hodge classes \cite{kok}, and to study zero-cycles over non-closed fields \cite{alexandrou-schreieder,Ale-zero-cycles}. 
However, the previous techniques did not allow to tackle the case of $p$-torsion coefficients in characteristic $p$, partly because the strong form of purity that is used in all of the above works fails for logarithmic de Rham Witt cohomology, see e.g.\ \cite[p.\ 45, Remarque]{gros}.

In the case of logarithmic de Rham Witt cohomology, Theorem \ref{thm:main-intro} yields the following explicit calculations for the refined unramified cohomology groups.

\begin{corollary}\label{cor:log-de-Rham-Witt} 
Let $k$ be a perfect field of positive characteristic $p>0$. Let $X$ be a smooth and equi-dimensional algebraic $k$-scheme. Then for all integers $i,j$, we have canonical isomorphisms 
\begin{equation}\label{eq:log-de-Rham-Witt} H^{i}_{j,nr}(X_{\et},(\Z/p^{r})_{X}(n))\cong\begin{cases} H^{i}_{L}(X,\Z/p^{r}(n)),\ &\text{if}\ \ j\geq i-n \\
    H^{j}(X_{\zar}, \R^{i-j}\pi_{\ast}(\Z/p^{r})_{X}(n)),\ &\text{if}\ \ j=i-n-1 \\
    0,\ &\text{if}\ \  j \leq i-n-2,
\end{cases}    
\end{equation}
where the complex $(\Z/p^{r})_{X}(n)$ denotes 
the $-n$-shifted logarithmic de Rham Witt sheaf $W_{r}\Omega^{n}_{X,\log}$ from \cite{illusie} and where $\pi: X_{\et}\to X_{\zar}$ is the natural map of sites induced by the identity.
\end{corollary}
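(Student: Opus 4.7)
The plan is to apply Theorem~\ref{thm:main-intro} in the logarithmic de Rham-Witt case (example~\eqref{item:3:log-de-Rham-Witt} above), using the Geisser-Levine identification $(\Z/p^{r})_{X}(n)\simeq W_{r}\Omega^{n}_{X,\log}[-n]$ recalled in the introduction, to obtain for every $i$ and $j$ the canonical isomorphism
$$
H^{i}_{j,nr}(X_{\et},(\Z/p^{r})_{X}(n))\cong H^{i}(X_{\zar},\tau_{\geq i-j}\R\pi_{\ast}(\Z/p^{r})_{X}(n)).
$$
The task thus reduces to analysing the truncations appearing on the right-hand side.

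The decisive geometric input is the vanishing
$$
\R^{q}\pi_{\ast}(\Z/p^{r})_{X}(n)=0\qquad\text{whenever } q\notin\{n,n+1\}.
$$
Vanishing for $q<n$ is automatic, since $(\Z/p^{r})_{X}(n)$ is concentrated in cohomological degree $n$. For $q\geq n+2$, I would invoke the Gersten resolution of the Zariski sheaf $\R^{q-n}\pi_{\ast}W_{r}\Omega^{n}_{X,\log}$, due to Gros and Suwa for smooth varieties over perfect fields of characteristic $p$. This is an exact sequence
$$
0\to\R^{q-n}\pi_{\ast}W_{r}\Omega^{n}_{X,\log}\to\bigoplus_{x\in X^{(0)}}(i_{x})_{\ast}H^{q-n}_{\et}(k(x),W_{r}\Omega^{n}_{k(x),\log})\to\cdots,
$$
and since the $p$-cohomological dimension of any field of characteristic $p$ is at most one, the term in codimension zero vanishes as soon as $q-n\geq 2$. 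Exactness of the Gersten complex then forces $\R^{q}\pi_{\ast}(\Z/p^{r})_{X}(n)=0$ in this range.

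Once this is granted, the three cases of the corollary follow at once by tracking the effect of $\tau_{\geq i-j}$. If $j\geq i-n$, then $i-j\leq n$ and the truncation equals the full complex $\R\pi_{\ast}(\Z/p^{r})_{X}(n)$ (which is already concentrated in degrees $\geq n$), so the hypercohomology recovers $H^{i}_{L}(X,\Z/p^{r}(n))$. If $j=i-n-1$, then $i-j=n+1$ and the truncation $\tau_{\geq n+1}\R\pi_{\ast}(\Z/p^{r})_{X}(n)$ is quasi-isomorphic to $\R^{n+1}\pi_{\ast}(\Z/p^{r})_{X}(n)[-(n+1)]$, whose $i$-th Zariski hypercohomology reads $H^{j}(X_{\zar},\R^{i-j}\pi_{\ast}(\Z/p^{r})_{X}(n))$. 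Finally, if $j\leq i-n-2$, then $i-j\geq n+2$ and the truncation is acyclic, yielding zero.

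The main obstacle is the Gersten-type vanishing $\R^{q}\pi_{\ast}=0$ for $q\geq n+2$: it relies on the non-trivial Gersten conjecture for logarithmic de Rham-Witt sheaves (Gros-Suwa), which compensates for the failure of the strong purity in positive characteristic noted just before the statement. Given that input, the remaining steps are a formal unwinding of Theorem~\ref{thm:main-intro}.
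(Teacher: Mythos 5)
Your proposal is correct and follows essentially the same route as the paper: apply Theorem~\ref{thm:main-intro}, invoke the Gros--Suwa vanishing $\R^{q}\pi_{\ast}(\Z/p^{r})_{X}(n)=0$ for $q\neq n,n+1$ (which the paper cites directly as \cite[Corollaire 1.5]{gros-suwa}, while you re-derive the part $q\geq n+2$ from the Gersten resolution together with $\operatorname{cd}_p\leq 1$), and then read off the three cases from the truncation. Your treatment of the middle case $j=i-n-1$ via the direct observation that $\tau_{\geq n+1}\R\pi_{\ast}(\Z/p^{r})_{X}(n)$ is a shift of a single sheaf is a small simplification of the paper's spectral-sequence argument, but leads to the same conclusion.
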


Another direct application of Theorem \ref{thm:main-intro} is as follows.

\begin{corollary} \label{cor:functoriality}
Let $f:X\to Y$ be a morphism between equi-dimensional smooth algebraic $k$-schemes and let $k$, $\nu$ and $K_Y^\bullet\in D(Y_\nu,\Z)$ be as in one of the examples \eqref{item:1:etale-coho}--\eqref{item:7:etale-motivic-cohomology} above.  
Let $K_X^\bullet\in D(X_\nu,\Z)$ be the analogous complex on $X_\nu$.
Then for all $i,j$ there is a functorial pullback map
$$
f^\ast:H^i_{j,nr}(Y,K_Y^\bullet)\longrightarrow H^i_{j,nr}(X, K_X^\bullet),
$$  
which is compatible with the canonical pullback $H^i(Y,K_Y^\bullet)\to H^i(X,K_X^\bullet)$ in the sense that the following diagram commutes: 
\begin{equation*}
    \begin{tikzcd}
{H^i(Y,K_Y^\bullet)} \arrow[r, "f^{\ast}"] \arrow[d] & { H^i(X,K_X^\bullet)} \arrow[d] \\
{H^i_{j,nr}(Y,K_Y^\bullet)} \arrow[r, "f^{\ast}"]    & {H^i_{j,nr}(X, K_X^\bullet).}   
\end{tikzcd}
\end{equation*}
\end{corollary}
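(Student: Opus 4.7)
The plan is to obtain $f^{\ast}$ by passing through the canonical identification of refined unramified cohomology with the hypercohomology of a truncated pushforward provided by Theorem \ref{thm:main-intro}, thereby reducing the required functoriality to that of ordinary hypercohomology.

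First, in each of the examples \eqref{item:1:etale-coho}--\eqref{item:7:etale-motivic-cohomology} the complex $K_X^\bullet$ comes equipped with a canonical ``pullback'' morphism $\alpha:f_\nu^{-1}K_Y^\bullet\to K_X^\bullet$ in $D(X_\nu,\Z)$: in \eqref{item:1:etale-coho}, \eqref{item:2:Betti-coho}, \eqref{item:5:proetal} this is an isomorphism by definition; in \eqref{item:4:algebraic-de-Rham}, \eqref{item:3:log-de-Rham-Witt}, \eqref{item:7:de-Rham-Witt} it is provided by the naturality of the (logarithmic, Witt) de Rham formalism; and in \eqref{item:6:motivic-cohomology}, \eqref{item:7:etale-motivic-cohomology} it is induced by flat pullback on Bloch's cycle complex. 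Denoting by $\pi_X,\pi_Y$ the canonical maps $(-)_\nu\to(-)_{\zar}$ and by $f_\nu,f_{\zar}$ the morphisms of sites induced by $f$, one composes $\alpha$ with the base change morphism $f_{\zar}^{-1}\R\pi_{Y\ast}K_Y^\bullet\to \R\pi_{X\ast}f_\nu^{-1}K_Y^\bullet$ attached to the commutative square $\pi_Y\circ f_\nu=f_{\zar}\circ\pi_X$, obtaining a canonical morphism
\begin{equation*}
\beta:f_{\zar}^{-1}\R\pi_{Y\ast}K_Y^\bullet\longrightarrow \R\pi_{X\ast}K_X^\bullet \quad \text{in } D(X_{\zar},\Z).
\end{equation*}

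Since $f_{\zar}^{-1}$ is exact on Zariski sheaves of abelian groups, it commutes with the truncation functor $\tau_{\geq s}$; hence, setting $s=i-j$, applying $\tau_{\geq s}$ to $\beta$ and taking hypercohomology (using the adjunction unit $\id\to \R f_{\zar\ast}f_{\zar}^{-1}$) produces a natural map
\begin{equation*}
\HH^i(Y_{\zar},\tau_{\geq s}\R\pi_{Y\ast}K_Y^\bullet)\to \HH^i(X_{\zar},\tau_{\geq s}\R\pi_{X\ast}K_X^\bullet).
\end{equation*}
Invoking Theorem \ref{thm:main-intro} on both sides then yields the sought pullback $f^\ast:\HH^i_{j,nr}(Y,K_Y^\bullet)\to \HH^i_{j,nr}(X,K_X^\bullet)$; functoriality in $f$ follows from that of the base change morphism and of $\alpha$.

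For the compatibility assertion, when $j\geq\dim X$ we have $F_{j+1}X=F_jX=X$, so $\HH^i_{j,nr}(X,K_X^\bullet)=\HH^i(X_\nu,K_X^\bullet)$, and analogously for $Y$ once $j\geq\dim Y$; thus both refined unramified groups reduce to ordinary $\nu$-cohomology. Inspection of the construction shows that in this range $f^\ast$ is compatible with the tautological map $\R\pi_\ast K^\bullet\to\tau_{\geq s}\R\pi_\ast K^\bullet$ on each side, so it is identified with the canonical pullback on $\HH^i(-,K^\bullet)$. The only substantive point is thus the existence and naturality of the morphisms $\alpha$ in each of the listed examples; this is standard but must be verified case by case, and it is the step that I expect to require the most care.
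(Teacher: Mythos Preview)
Your approach is essentially the same as the paper's: use Theorem \ref{thm:main-intro} to identify $H^i_{j,nr}$ with $\HH^i(-,\tau_{\geq i-j}\R\pi_\ast K^\bullet)$, then construct the pullback from the base change morphism together with a comparison map $f^{-1}K_Y^\bullet\to K_X^\bullet$, and deduce the compatibility for large $j$ from the canonical map $\R\pi_\ast K^\bullet\to\tau_{\geq i-j}\R\pi_\ast K^\bullet$. The only point where your sketch is too quick is the motivic case \eqref{item:6:motivic-cohomology}--\eqref{item:7:etale-motivic-cohomology}: ``flat pullback on Bloch's cycle complex'' does not directly give a morphism $f^{-1}K_Y^\bullet\to K_X^\bullet$ for an arbitrary (non-flat) $f$; the paper circumvents this by passing to the quasi-isomorphic Friedlander--Suslin complex $\Z^{SF}(n)$, where the contravariant functoriality of $z_{equi}(\mathbb{A}^n,0)$ supplies the needed map of complexes of sheaves.
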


The existence of pullbacks for refined unramified cohomology is a priori not clear.
Indeed, a class in $H^i_{j,nr}(Y,K_Y^\bullet)$ is represented by an unramified cohomology class on an open subset $V$ of $Y$ whose complement has codimension $j+1$ in $Y$.
This class can be pulled back to a class on 
$U:=f^{-1}(V)$, but unless $f$ is flat, it is not clear that the complement of $U$ in $X$ has codimension $j+1$.
Hence there is no direct way to define pullbacks.
This issue has been solved in \cite{Sch-moving} for various cohomology theories (including $\ell$-adic (pro-)\'etale cohomology) of quasi-projective varieties in characteristic zero via a moving lemma for cohomology with support and in \cite{kok-zhou2} for $\ell$-adic \'etale cohomology of any smooth variety via deformation to the normal cone.
The above result is more general and covers in particular the case of $p$-torsion coefficients in characteristic $p$, which is new.
In fact, the existence of functorial pullbacks for refined unramified cohomology in the case of non homotopy invariant examples e.g.\ the example \eqref{item:7:de-Rham-Witt} which computes crystalline cohomology, does not follow from the methods considered in \cite{kok-zhou2}, see \cite[Definition 2.4]{kok-zhou2}.\par

In most of the examples \eqref{item:1:etale-coho}--\eqref{item:7:etale-motivic-cohomology}, it is easy to see the existence of pushforwards and exterior products for refined unramified cohomology (via its very definition, not via Theorem \ref{thm:main-intro}).
Combining this with the pullbacks from Corollary \ref{cor:functoriality} one can reprove the results in \cite{Sch-moving,kok-zhou2}.
Our argument is slightly more general and allows us to treat for instance refined unramified cohomology $H^{i}_{j,nr}(X_{\zar},W\Omega^{\bullet}_{X})$ associated to the complex in \eqref{item:7:de-Rham-Witt}, which computes integral crystalline cohomology (see \cite[p.\ 606, Th\'eor\`eme II.1.4]{illusie}) and is not covered in \cite{Sch-moving,kok-zhou2}.  

\begin{corollary}\label{cor:action-correspondences} Let $k$ be a perfect field of characteristic $p>0$. Let $X$ and $Y$ be smooth and equi-dimensional algebraic schemes over $k$. Assume further that $X$ is proper over $k$. If we set $d_{X}:=\dim X$, then for all $c,i,j\geq 0,$ there is a bi-additive pairing 
\begin{equation}\label{eq:action-correspondences}
    \CH^{c}(X\times Y)\times H^{i}_{j,nr}(X,W\Omega^{\bullet}_{X})\longrightarrow H^{i+2c-2d_{X}}_{j+c-d_{X},nr}(Y,W\Omega^{\bullet}_{Y}), \ \ \  ([\Gamma],\alpha)\mapsto [\Gamma]_{\ast}(\alpha),
\end{equation} 
which is functorial with respect to the composition of correspondences.
\end{corollary}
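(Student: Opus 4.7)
The strategy is to construct the correspondence action via the standard formula
\[
[\Gamma]_\ast(\alpha) := p_{Y,\ast}\bigl(\cl([\Gamma]) \cup p_X^\ast(\alpha)\bigr),
\]
where $p_X \colon X \times Y \to X$ and $p_Y \colon X \times Y \to Y$ are the projections. The construction assembles four ingredients on refined unramified cohomology: pullback, cycle class, internal cup product, and proper pushforward. Throughout we rely on Theorem \ref{thm:main-intro}, which gives $\HH^i_{j,nr}(Z, W\Omega^\bullet_Z) \cong \HH^i(Z_{\zar}, \tau_{\geq i-j} W\Omega^\bullet_Z)$ for any smooth $k$-scheme $Z$ (noting $\pi = \id$ since $\nu = \zar$ in example \eqref{item:7:de-Rham-Witt}); this identification keeps track of the filtration shifts under the various operations.

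The pullback $p_X^\ast \colon \HH^i_{j,nr}(X, W\Omega^\bullet_X) \to \HH^i_{j,nr}(X \times Y, W\Omega^\bullet_{X \times Y})$ is provided by Corollary \ref{cor:functoriality}. The DGA structure on $W\Omega^\bullet$, combined with the standard multiplicativity of canonical truncations $\tau_{\geq s} \otimes^{\mathbb{L}} \tau_{\geq s'} \to \tau_{\geq s+s'}$, yields via Theorem \ref{thm:main-intro} internal cup products $\HH^i_{j,nr}(Z) \otimes \HH^{i'}_{j',nr}(Z) \to \HH^{i+i'}_{j+j',nr}(Z)$. The cycle class $\cl([\Gamma]) \in \HH^{2c}_{c,nr}(X \times Y, W\Omega^\bullet)$ arises from the de Rham--Witt/crystalline cycle class of $\Gamma$ via the natural restriction $\HH^{2c}(X \times Y, W\Omega^\bullet) \to \HH^{2c}(F_c(X \times Y), W\Omega^\bullet)$, whose image automatically lies in $\HH^{2c}_{c,nr}$ since this restriction factors through $\HH^{2c}(F_{c+1}(X \times Y), W\Omega^\bullet)$.

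For the proper pushforward, apply the de Rham--Witt trace map $\R p_{Y,\ast} W\Omega^\bullet_{X \times Y} \to W\Omega^\bullet_Y[-2d_X]$ (Ekedahl/Gros): a class represented on an open $W \subset X \times Y$ with $\codim((X \times Y) \setminus W) \geq j+c+2$ pushes to a class on $Y \setminus p_Y((X \times Y) \setminus W)$. Since $X$ is proper, so is $p_Y$, and a dimension count gives $\codim p_Y((X \times Y) \setminus W) \geq (j+c-d_X)+2$ in $Y$, yielding $p_{Y,\ast} \colon \HH^{i+2c}_{j+c, nr}(X \times Y) \to \HH^{i+2c-2d_X}_{j+c-d_X, nr}(Y, W\Omega^\bullet_Y)$. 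The composition
\[
\HH^i_{j,nr}(X) \xrightarrow{p_X^\ast} \HH^i_{j,nr}(X \times Y) \xrightarrow{\cup \cl([\Gamma])} \HH^{i+2c}_{j+c,nr}(X \times Y) \xrightarrow{p_{Y,\ast}} \HH^{i+2c-2d_X}_{j+c-d_X,nr}(Y)
\]
then produces the pairing \eqref{eq:action-correspondences}, bi-additive by construction.

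Functoriality $[\Gamma' \circ \Gamma]_\ast = [\Gamma']_\ast \circ [\Gamma]_\ast$ follows from the standard base-change and projection-formula identities for pullback, pushforward, and cup product in crystalline/de Rham--Witt cohomology, which descend from complex-level statements to the refined unramified groups via Theorem \ref{thm:main-intro} and the filtration-tracking arguments above. The main obstacle will be the proper pushforward step: since strong purity is unavailable for the de Rham--Witt complex (cf.\ the paper's comment citing Gros), one cannot construct the pushforward via a local Gysin / purity approach as in \cite{Sch-moving,kok-zhou2}, but must invoke the Ekedahl trace at the complex level and carefully verify its compatibility with the canonical truncations---this is precisely the input that allows the corollary to cover the crystalline case not addressed in those references.
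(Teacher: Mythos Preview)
Your approach is correct and arrives at the same destination, but the paper organizes the product step differently. Rather than building an internal cup product on $H^\ast_{\ast,nr}$ from the DGA structure of $W\Omega^\bullet$ and the multiplicativity of truncations, the paper proves a separate exterior-product lemma (Lemma~\ref{lem:action-cycles}): a pairing $\CH^c(Z_1) \times H^i_{j,nr}(Z_2) \to H^{i+2c}_{j+c,nr}(Z_1 \times Z_2)$ built concretely at the level of open subsets using the cycle class \emph{with support} $\cl_{|\Gamma|}(\Gamma)$, with descent from $\mathcal Z^c$ to $\CH^c$ checked by hand by enlarging the support to codimension $c-1$ where the supported class dies. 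The correspondence action is then assembled as $q_\ast \circ \Delta^\ast \circ (\text{exterior product}) \circ (\id \times p^\ast)$ with $\Delta\colon X\times Y \hookrightarrow (X\times Y)^2$. Your shortcut---global cycle class plus internal cup---bypasses the support calculus since $\CH^c \to H^{2c}$ already factors through rational equivalence, and is cleaner once one accepts the truncation-multiplicativity argument; the paper's route never invokes Theorem~\ref{thm:main-intro} for the product itself, only for the pullbacks $p^\ast$ and $\Delta^\ast$. Your pushforward matches the paper's Lemma~\ref{lem:pushforward} exactly (direct construction on $H^i(F_jX)$ via the Gros trace on proper restrictions), so your closing caveat about verifying compatibility of the trace with canonical truncations is unnecessary---neither you nor the paper route the pushforward through Theorem~\ref{thm:main-intro}.
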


Theorem \ref{thm:main-intro} will be deduced from the following more general result, which applies essentially to any site theoretic cohomology theory that satisfies a version of the Gersten conjecture.

\begin{theorem}\label{thm:main} Let $X$ be an equi-dimensional algebraic $k$-scheme. 
Let $\pi: X_{\nu}\to X_{\zar}$ be a morphism of sites associated to some Grothendieck topology $\nu$ on $(\text{Sch}/k)$ that contains all Zariski open coverings.  
Suppose that there is a complex $\Co^{\bullet}\in D(X_{\nu},\Z)$ such that for all $i$, there is a resolution 
$\epsilon^{i}:\R^{i}\pi_{\ast}\Co^{\bullet}\to \mathcal{E}^{i,\bullet}_{X}$ in $ D^{+}(X_{\zar},\Z)$ by a complex $\mathcal{E}^{i,\bullet}_{X}$ concentrated in non-negative degrees of the form: 
\begin{equation}\label{eq:flasque-resolution}
\mathcal{E}^{i,\bullet}_{X}:\ 
0\longrightarrow \bigoplus_{x\in X^{(0)}} \iota_{x\ast}\underline {\A}^{i}_{x}\longrightarrow \bigoplus_{x\in X^{(1)}} \iota_{x\ast}\underline {\A}^{i+1}_{x} \longrightarrow \bigoplus_{x\in X^{(2)}} \iota_{x\ast}\underline {\A}^{i+2}_{x} \longrightarrow \cdots \longrightarrow \bigoplus_{x\in X^{(r)}} \iota_{x\ast}\underline {\A}^{i+r}_{x}\longrightarrow \cdots,
\end{equation} 
where $\iota_{x\ast}\underline {\A}^{i+j}_{x}$ placed in degree $j$
is a constant sheaf supported on $\overline{\{x\}}\subset X$ which corresponds to some abelian group $\A^{i+j}_{x}$.

\par Then for all integers $i,s,$ there is a canonical isomorphism: 
\begin{equation}\label{eq:can-iso}\HH^{i}(X_{\zar},\tau_{\geq s}\R\pi_{\ast}\Co^{\bullet})\cong \HH^{i}_{i-s,nr}(X_{\nu},\Co^{\bullet}),
\end{equation} 
where $\HH^{i}_{j,nr}(X_{\nu},\Co^{\bullet}):=\im(\HH^{i}(F_{j+1}X,\Co^{\bullet})\to \HH^{i}(F_{j}X,\Co^{\bullet}))$ and $\HH^{i}(F_{j}X,\Co^{\bullet})=\colim_{U\supset F_jX} H^i(U,\Co^{\bullet})$. 
\end{theorem}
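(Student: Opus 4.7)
My strategy is to exploit the hypothesized Gersten-type resolutions to compare two spectral sequences: the coniveau spectral sequence attached to the filtration $F_{\bullet}X$, and the hypercohomology spectral sequence of the canonical truncation $\tau_{\geq s}\R\pi_{\ast}\Co^{\bullet}$. Since each sheaf $\iota_{x\ast}\underline{\A}^{i+j}_{x}$ appearing in \eqref{eq:flasque-resolution} is flasque on $X_{\zar}$, the coniveau spectral sequence on $X$ converging to $\HH^{\ast}(X_{\nu},\Co^{\bullet})$ has $E_{1}$-page given column-wise by the Gersten complexes \eqref{eq:flasque-resolution} and degenerates at $E_{2}$ with $E_{2}^{p,q}(X)=H^{p}(X_{\zar},\R^{q}\pi_{\ast}\Co^{\bullet})$. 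The analogous spectral sequence on $F_{j}X$ is obtained by observing that, for each point $x\in X$ of codimension $r$, the sheaf $\iota_{x\ast}\underline{\A}^{q+r}_{x}$ contributes to the filtered colimit $\colim_{U\supset F_{j}X}\Gamma(U,-)$ iff $x\in F_{j}X$, iff $r\le j$; for $r>j$ this is killed by taking the admissible open $U:=X\setminus\overline{\{x\}}\supset F_{j}X$. Thus the $E_{1}$-page on $F_{j}X$ is the column-truncation at $p\le j$ of the $E_{1}$-page on $X$, and likewise degenerates at $E_{2}$.

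On the other side, the hypercohomology spectral sequence of $\tau_{\geq s}\R\pi_{\ast}\Co^{\bullet}$ has $E_{2}^{p,q}=H^{p}(X_{\zar},\R^{q}\pi_{\ast}\Co^{\bullet})$ for $q\ge s$ and vanishes for $q<s$; by the same input it also degenerates at $E_{2}$. Setting $j:=i-s$, the diagonal $p+q=i$ of both spectral sequences is supported on the common strip $\{0\le p\le j,\ q\ge s\}$, with identical $E_{\infty}$-terms $H^{p}(X_{\zar},\R^{i-p}\pi_{\ast}\Co^{\bullet})$. The restriction map $\HH^{i}(F_{j+1}X,\Co^{\bullet})\to\HH^{i}(F_{j}X,\Co^{\bullet})$ is an isomorphism on graded pieces for $p<j$; at $p=j$ it identifies $H^{j}(X_{\zar},\R^{i-j}\pi_{\ast}\Co^{\bullet})$ as the subgroup of the (a priori larger) $E_{\infty}^{j,i-j}(F_{j}X)$ cut out by the vanishing of residues at codimension-$(j+1)$ points; and the graded pieces of the source at $p=j+1$ do not lift to $F_{j}X$. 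Hence the image of this restriction map has the same associated graded object as $\HH^{i}(X_{\zar},\tau_{\geq s}\R\pi_{\ast}\Co^{\bullet})$.

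To upgrade this matching of graded objects into a canonical isomorphism, I would work with a concrete filtered model of $\R\pi_{\ast}\Co^{\bullet}$ obtained from a flasque resolution of $\Co^{\bullet}$ on $X_{\nu}$ equipped with its coniveau filtration by codimension of support; by hypothesis the associated double complex has columns given by the Gersten resolutions \eqref{eq:flasque-resolution}. Row-truncation at $q\ge s$ represents $\tau_{\geq s}\R\pi_{\ast}\Co^{\bullet}$ in $D^{+}(X_{\zar},\Z)$, while the filtered colimit over $U\supset F_{j}X$ of column-truncations at $p\le j$ computes $\R\Gamma(F_{j}X,\Co^{\bullet})$; passing to the bi-truncated sub-double complex (rows $q\ge s$, columns $p\le j$) produces the desired canonical morphism between them. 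The main technical obstacle is to verify that this morphism lands in the image of $\HH^{i}(F_{j+1}X,\Co^{\bullet})$ inside $\HH^{i}(F_{j}X,\Co^{\bullet})$ and is bijective onto it; this reduces to showing that any cocycle of the bi-truncated complex extends to a cocycle in the column-truncation at $p\le j+1$, equivalently has trivial residues at codimension-$(j+1)$ points, a feature forced by the row constraint $q\ge s$ via the $E_{2}$-degeneration established in the first two paragraphs.
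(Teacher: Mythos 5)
Your argument hinges on the assertion that the coniveau spectral sequence for $H^{\ast}(X_{\nu},\Co^{\bullet})$ and the hypercohomology spectral sequence for $\tau_{\geq s}\R\pi_{\ast}\Co^{\bullet}$ both degenerate at $E_{2}$, and this is false in general. The Gersten resolution only tells you that the $E_{1}$-differential of the coniveau spectral sequence computes $E_{2}^{p,q}=H^{p}(X_{\zar},\R^{q}\pi_{\ast}\Co^{\bullet})$ — equivalently, that the coniveau and Leray spectral sequences agree from $E_{2}$ on (Bloch--Ogus--Gabber) — but it says nothing about the differentials $d_{r}$ for $r\geq 2$. Those higher differentials are precisely what encode the gap between, say, motivic cohomology and the direct sum $\bigoplus_{p}H^{p}(X_{\zar},\mathcal H^{q})$, and they are nonzero for almost all of the cohomology theories listed in items (1)--(8). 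Once degeneration fails, your identification of $E_{\infty}$-terms with $E_{2}$-terms collapses, and the claimed matching of associated graded pieces along the diagonal $p+q=i$ between the two spectral sequences is no longer available. There is a second, independent gap in your third paragraph: the hypothesis of the theorem only supplies, for each $i$, an abstract resolution $\R^{i}\pi_{\ast}\Co^{\bullet}\to\mathcal E^{i,\bullet}_{X}$ by constant sheaves on closures of points; it does \emph{not} identify $\A^{i+r}_{x}$ with a local cohomology group $H^{i+r}_{x}(X,\Co^{\bullet})$, nor does it provide a filtered flasque model of $\Co^{\bullet}$ whose coniveau-graded columns are the $\mathcal E^{i,\bullet}_{X}$. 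Producing such a model is essentially the effacement theorem, which the paper deliberately does not assume here (it is verified separately for each example in the proof of Theorem~\ref{thm:main-intro}).

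The paper's own proof avoids spectral-sequence degeneration entirely and is considerably more elementary. It works with a single complex $K^{\bullet}=\R\pi_{\ast}\Co^{\bullet}$ on $X_{\zar}$ and establishes two lemmas from the Gersten hypothesis alone: first, that the restriction $H^{i}(F_{j+1}X,\tau_{\geq s}K^{\bullet})\to H^{i}(F_{j}X,\tau_{\geq s}K^{\bullet})$ is an isomorphism for $j>i-s$ and injective for $j=i-s$, proved via a direct-limit localization sequence and the vanishing $H^{p}_{x}(X,\mathcal H^{q}(K^{\bullet}))=0$ in the relevant range (which follows because $\Gamma(Z_{x},\iota^{!}\mathcal E^{q,\bullet}_{X})$ is concentrated in degrees $\geq c$); second, that $H^{i}(F_{j}X,\tau_{\leq s}K^{\bullet})=0$ for $j<i-s$, because $\Gamma(F_{j}X,\mathcal E^{q,\bullet}_{X})$ lives in degrees $\leq j$. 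Only the vanishing of certain $E_{2}$-entries is ever used, not degeneration, and the final isomorphism comes from splicing these lemmas with the triangle $\tau_{\leq s-1}K^{\bullet}\to K^{\bullet}\to\tau_{\geq s}K^{\bullet}$. You would need to either prove degeneration in this generality (you cannot) or retool your argument so that, as in the paper, only $E_{2}$-vanishing statements enter.
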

 
Note that the resolution in \eqref{eq:flasque-resolution} is automatically acyclic, because the sheaves $\iota_{x\ast}\underline {\A}^{i+j}_{x}$ are flasque on the Zariski site of $X$.

 \par 
In practice, the resolution \eqref{eq:flasque-resolution} that we consider is  given by instances where Gersten's conjecture holds, see e.g.\ \cite{quillen,BO,gros-suwa,CTHK,Sch-moving}. 
This requires $X$ to be smooth, even though the formal set-up in the above theorem does not need this requirement. 

\section{Notation, conventions, and preliminaries.}
The exponential characteristic of a field $k$ of characteristic $p\geq 0$ is $1$ if $p=0$ and it is $p$ otherwise. 

An algebraic scheme is a separated scheme of finite type over a field.
A variety is an integral algebraic scheme.
 
If $X$ is a scheme and $\nu$ denotes a Grothendieck topology on $X$, then $D(X_{\nu},\Z)$ denotes the derived category of the abelian category of sheaves of abelian groups on $X_\nu$. 
We further denote by $D^+(X_\nu,\Z)$ the full triangulated subcategory that consists of objects that have trivial cohomology sheaves in sufficiently negative degrees. 
For a complex $K^\bullet\in D(X_\nu,\Z)$, we denote by $\mathcal H^n(K^\bullet)$ its $n$-th cohomology sheaf.

Recall that for any complex $K^{\bullet}\in D (X_{\nu},\Z)$ there is a K-injective replacement $K^\bullet\to I^\bullet$, see
 \cite[\href{https://stacks.math.columbia.edu/tag/079P}{Tag 079P}]{stacks-project}. 
 Using this, any left exact functor can be derived, see  \cite[\href{https://stacks.math.columbia.edu/tag/079V}{Tag 079V}]{stacks-project}. 
For a complex $K^\bullet\in D(X_\nu,\Z)$ and a closed subset $Z\subset X$, we use the notations 
$$
H^i(X_\nu,K^\bullet):=R^i\Gamma(X_\nu,K^\bullet)\ \ \text{and}\ \ H^i_Z(X_\nu,K^\bullet):=R^i\Gamma_Z(X_\nu,K^\bullet) ,
$$
where $\Gamma$ and $\Gamma_Z$ denote the global section functor, and the global section functor with support, respectively. 
If $K^\bullet=\mathcal F[0]$ is a sheaf placed in degree zero, then we write $H^i(X_\nu,\mathcal F):=H^i(X_\nu, \mathcal F[0])$. 
If $j:U\hookrightarrow X$ is an open immersion, then we write
$$
H^i(U_\nu, K^\bullet):=H^i(U_\nu,j^\ast K^\bullet).
$$

We write $F_{j}X:=\{x\in X\ |\ \codim(x)\leq j\},$ where $\codim(x):=\dim X-\dim \overline{\{x\}}$.
This may be seen as a pro-scheme which consists of all open subsets of $X$ that contain all codimension $j$ points of $X$.
For any Grothendieck topology $\nu$ on $(\text{Sch}/k)$ and any complex $K^{\bullet}\in D (X_{\nu},\Z)$, we define the group 
\begin{equation}\label{eq:def-F_{j}X} \HH^{i}(F_{j}X,K^{\bullet}):=\varinjlim \HH^{i}(U_{\nu},K^{\bullet}),\end{equation}
 where the direct limit is taken over all Zariski open subsets $U\subset X$ with $F_{j}X\subset U$.
If $U\subset V\subset X$ are Zariski open subsets, then there are canonical restriction maps $H^i(V_\nu,K^\bullet)\to H^i(U_\nu,K^\bullet)$.
These maps induce natural restriction maps
$$
 \HH^{i}(F_{j+1}X,K^{\bullet})\longrightarrow  \HH^{i}(F_{j}X,K^{\bullet}).
$$
In analogy to \cite[Definition 5.1]{Sch-refined}, we define the image of this map as the $j$-th refined unramified cohomology of the complex $K^\bullet\in D(X_\nu,\Z)$ by
\begin{equation}\label{eq:refined-unramified}  
H^i_{j,nr}(X,K^\bullet):=\im\left( \HH^{i}(F_{j+1}X,K^{\bullet})\to  \HH^{i}(F_{j}X,K^{\bullet})  \right).
\end{equation}

\section{Proof of Theorem \ref{thm:main}}
The following is Theorem \ref{thm:main} with a slightly different but equivalent formulation.
\begin{theorem}\label{thm:main-general} Let $X$ be an equi-dimensional algebraic $k$-scheme. Let $K^{\bullet}\in D(X_{\zar},\Z)$ be a complex, such that for all $i$, there is a resolution $\epsilon^{i}:\mathcal{H}^{i}(K^{\bullet})\to \mathcal{E}^{i,\bullet}_{X}$ in $D^{+}(X_{\zar},\Z)$ of the $i$-th Zariski cohomology sheaf of $K^{\bullet}$ by a complex $\mathcal{E}^{i,\bullet}_{X}$ concentrated in non-negative degrees of the form: 
\begin{equation}\label{eq:flasque-resolution-general}
\mathcal{E}^{i,\bullet}_{X}:\ 
0\longrightarrow \bigoplus_{x\in X^{(0)}} \iota_{x\ast}\underline {\A}^{i}_{x}\longrightarrow \bigoplus_{x\in X^{(1)}} \iota_{x\ast}\underline {\A}^{i+1}_{x} \longrightarrow \bigoplus_{x\in X^{(2)}} \iota_{x\ast}\underline {\A}^{i+2}_{x} \longrightarrow \cdots \longrightarrow \bigoplus_{x\in X^{(r)}} \iota_{x\ast}\underline {\A}^{i+r}_{x}\longrightarrow \cdots,
\end{equation} 
where $\iota_{x\ast}\underline {\A}^{i+j}_{x}$ placed in degree $j$ is a constant sheaf supported on $\overline{\{x\}}\subset X$ which corresponds to some abelian group $\A^{i+j}_{x}$.

\par Then for all integers $i,s,$ there is a canonical isomorphism: 
\begin{equation}\label{eq:can-iso-general}\HH^{i}(X_{\zar},\tau_{\geq s} K^{\bullet})\cong \HH^{i}_{i-s,nr}(X_{\zar},K^{\bullet}),
\end{equation} 
where the group $\HH^{i}_{j,nr}(X_{\zar},K^{\bullet})$ is the $j$-th refined unramified cohomology defined in \eqref{eq:refined-unramified}.
\end{theorem}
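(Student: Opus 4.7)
The plan is to construct the claimed isomorphism as an explicit three-step composition that routes through the pro-opens $F_{j+1}X$ and $F_jX$ (with $j=i-s$) and through both truncations $\tau_{\geq s}K^\bullet$ and $\tau_{<s}K^\bullet$. The single input driving everything is a Gersten-type vanishing extracted from the resolution \eqref{eq:flasque-resolution-general}. For any closed subscheme $Z\subset X$ of codimension $\geq c$, we have $\Gamma_Z(X,\iota_{x\ast}\underline{\A}^{q+p}_x)=0$ whenever $\codim(x)=p<c$ (such an $x$ does not lie in $Z$), and the flasqueness of the resolution then forces $\HH^p_Z(X,\mathcal{H}^q(K^\bullet))=0$ for $p<c$. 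By the dual calculation, using $\varinjlim_{U\supset F_jX}\Gamma(U,-)$, one gets $\HH^p(F_jX,\mathcal{H}^q(K^\bullet))=0$ for $p>j$. Both statements persist on any open subscheme $Y\subset X$ because the given resolutions restrict term-by-term.

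Feeding these pointwise vanishings into the hypercohomology spectral sequences for the two truncations yields two stable inputs: (A) $\HH^i_Z(Y,\tau_{\geq s}K^\bullet)=0$ whenever $i<s+\codim_Y(Z)$, for any open $Y$ and closed $Z\subset Y$, and (B) $\HH^i(F_jY,\tau_{<s}K^\bullet)=0$ for $i\geq s+j$. Setting $j=i-s$, applying (A) to the local cohomology sequence for $F_{j+1}X\subset X$ (complement of codim $\geq j+2$) produces an isomorphism $\alpha\colon \HH^i(X,\tau_{\geq s}K^\bullet)\xrightarrow{\sim}\HH^i(F_{j+1}X,\tau_{\geq s}K^\bullet)$, since both boundary terms vanish ($i,i+1<s+j+2=i+2$). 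Applying (A) once more to the pair $F_jX\subset F_{j+1}X$ (complement of codim $\geq j+1$) gives an injection $\beta\colon \HH^i(F_{j+1}X,\tau_{\geq s}K^\bullet)\hookrightarrow \HH^i(F_jX,\tau_{\geq s}K^\bullet)$. Finally, (B) on $F_jX$ yields an isomorphism $\gamma\colon \HH^i(F_jX,K^\bullet)\xrightarrow{\sim}\HH^i(F_jX,\tau_{\geq s}K^\bullet)$.

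The composition $\gamma^{-1}\circ\beta\circ\alpha$ is an injection $\HH^i(X,\tau_{\geq s}K^\bullet)\hookrightarrow \HH^i(F_jX,K^\bullet)$. To identify its image with $\HH^i_{i-s,nr}(X_\zar,K^\bullet)$, I chase the commutative square
$$\begin{CD}
\HH^i(F_{j+1}X, K^\bullet) @>>> \HH^i(F_jX, K^\bullet) \\
@VV\pi V @VV\gamma V \\
\HH^i(F_{j+1}X, \tau_{\geq s}K^\bullet) @>\beta>> \HH^i(F_jX, \tau_{\geq s}K^\bullet)
\end{CD}$$
in which $\pi$ is surjective by (B) on $F_{j+1}X$ (forcing $\HH^{i+1}(F_{j+1}X,\tau_{<s}K^\bullet)=0$). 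The image of the top row is $\HH^i_{i-s,nr}(X_\zar,K^\bullet)$ by definition, and under $\gamma$ it coincides with the image of $\beta$, which in turn equals the image of $\beta\circ\alpha$ because $\alpha$ is surjective. The main obstacle is the pro-object bookkeeping: all local cohomology sequences on $F_jX$ and $F_{j+1}X$ must be obtained as filtered colimits of sequences for honest open subschemes $V\subset U\subset X$, and one has to verify that the codimension bounds on $U\setminus V$ and $X\setminus U$ propagate through these colimits so that the vanishing (A) applies throughout. This is routine once one works with explicit cofinal systems of opens, but it is the step where attention is needed.
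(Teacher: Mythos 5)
Your proof is correct and follows essentially the same route as the paper: the two stable inputs you extract from the flasque Gersten resolution — your (A), the cohomological bound $\HH^i_Z(Y,\tau_{\geq s}K^\bullet)=0$ for $i<s+\codim_Y Z$, and your (B), the vanishing $\HH^i(F_jY,\tau_{<s}K^\bullet)=0$ for $i\geq s+j$ — are precisely the content of the paper's Lemma~\ref{lem:restriction-maps-F_{j}X} and Lemma~\ref{lem:vanishing-result} respectively, and your closing diagram chase with $\alpha,\beta,\gamma,\pi$ reproduces the paper's final commutative square (with the paper's Corollary~\ref{cor:restriction-maps-F_{j}X} appearing as $\beta\circ\alpha$ and Corollary~\ref{cor:comparison-result} as $\gamma$ and the surjectivity of $\pi$). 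The only minor stylistic difference is that you obtain the isomorphism $\HH^i(X,\tau_{\geq s}K^\bullet)\cong\HH^i(F_{j+1}X,\tau_{\geq s}K^\bullet)$ in one application of (A) to the complement of $F_{j+1}X$, whereas the paper iterates its restriction lemma one codimension at a time, and you are right to flag the colimit bookkeeping (expressing localization sequences on the pro-schemes $F_jX$ as filtered colimits over cofinal systems of honest opens, exactly as the paper organizes via the index set $\mathcal I$ of pairs $(Z,W)$) as the place where care is required.
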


\begin{proof} We prove a series of preparatory lemmas that combined all together yield Theorem \ref{thm:main-general}. We begin with the following: 

\begin{lemma}\label{lem:restriction-maps-F_{j}X} The natural restriction map \begin{equation*} \HH^{i}(F_{j+1}X,\tau_{\geq s}K^{\bullet})\longrightarrow \HH^{i}(F_{j}X,\tau_{\geq s}K^{\bullet})
\end{equation*} is an isomorphism if $j>i-s$ and injective for $j=i-s$.
\end{lemma}

\begin{proof} For any closed subscheme $\iota: Z\hookrightarrow X$ of pure codimension $c>0$ with complement $j_{U}:U\hookrightarrow X$, we consider the complex $\iota^{!}\mathcal{E}^{q,\bullet}_{X}\in D^{+}(Z_{\zar},\Z)$ defined by 
\begin{equation}\label{def:i^{!}E}
\iota_{\ast}\iota^{!}\mathcal{E}^{q,\bullet}_{X}:=\ker(\alpha_{j_{U}}:\mathcal{E}^{q,\bullet}_{X}\longrightarrow j_{U\ast}j^{\ast}_{U}\mathcal{E}^{q,\bullet}_{X}),
\end{equation} 
where the above kernel is taken in the abelian category of cochain complexes of abelian sheaves on $X_{\zar}$ and the map $\alpha_{j_{U}}$ is the unit of the relevant adjunction.\par

By evaluating \eqref{def:i^{!}E} at open subsets of $X$, the set-theoretic equality $X^{(p)}\setminus U^{(p)}=Z^{(p-c)}$ yields that 
$$
\iota^{!}\mathcal{E}^{q,p}_{X}=\bigoplus_{x\in Z^{(p-c)}}\iota_{x\ast}\underline{\A}^{q+p}_{x}
$$ 
is a complex of flasque sheaves on the Zariski site.
Moreover, we find that $\iota^{!}\mathcal{E}^{q,p}_{X}=0$ in all degrees $p<c$. 
Hence $\iota^{!}\mathcal{E}^{q,\bullet}_{X}=\mathcal{E}^{q+c,\bullet}_{Z}[-c]$, where $\mathcal{E}^{q+c,\bullet}_{Z}\in D^{+}(Z_{\zar},\Z)$ with
$$
\mathcal{E}^{q+c,p}_{Z}=\bigoplus_{x\in  Z^{(p)}} \iota_{x\ast} \underline{\A}_x^{q+c+p}
$$
is a complex of the form \eqref{eq:flasque-resolution-general} on $Z$ (by no means we claim here that the latter complex is exact in degrees $>0$).
 It follows directly from the definition of the derived functor for the global section functor with support $\Gamma_{Z}$ that for any non-empty open subset $V\subset X$, we get natural isomorphisms \begin{equation}\label{eq:can-iso-derived}
    \R\Gamma_{Z\cap V}(V_{\zar},\mathcal{H}^{q}(K^{\bullet}))\cong \Gamma(Z\cap V,\iota^{!}\mathcal{E}^{q,\bullet}_{X})= \Gamma(Z\cap V,\mathcal{E}^{q+c,\bullet}_{Z})[-c],
\end{equation}
that are contravariantly functorial with respect to $V$ and covariantly functorial with respect to equi-dimensional closed subsets $Z\subset Z'\subset X$ of the same codimension $c$ in $X$. Note that above we explicitly use that $\mathcal{H}^{q}(K^{\bullet})\to \mathcal{E}^{q,\bullet}_{X}$ is an acyclic (flasque) resolution for the global section functor with support. In particular, taking cohomology of \eqref{eq:can-iso-derived} gives: \begin{equation}\label{eq:can-iso-coh} H^{p}_{Z\cap V}(V_{\zar},\mathcal{H}^{q}(K^{\bullet}))\cong H^{p-c}(\Gamma(Z\cap V,\mathcal{E}^{q+c,\bullet}_{Z})).\end{equation} 

\par Now pick a pair $(Z,W)$ of equi-dimensional closed subsets $W\subset Z\subset X$ such that $\dim Z=\dim X-(j+1)$ and $\dim W=\dim X-(j+2)$. We assign to $(Z,W)$ a natural long exact localisation sequence: 
\begin{equation}\label{eq:long-exact-seq-(Z,W)}\cdots\longrightarrow H^{i}_{Z\setminus W}(X\setminus W,\tau_{\geq s}K^{\bullet})\longrightarrow H^{i}(X\setminus W,\tau_{\geq s}K^{\bullet})\longrightarrow H^{i}(X\setminus Z,\tau_{\geq s}K^{\bullet})\longrightarrow \cdots,\end{equation} which we denote by $\textbf{L-SEQ}_{(Z,W)}$.

\par Let $\mathcal{I}$ denote the index set, whose elements are pairs $(Z,W)$ of equi-dimensional closed subsets $W\subset Z\subset X$ as above, i.e. $\dim Z=\dim X-(j+1)$ and $\dim W=\dim X-(j+2)$. We turn $\mathcal{I}$ into a directed set by declaring $(Z',W')\leq (Z,W)$ if and only if $Z\subset Z'$ and $W\subset W'$. It is readily seen that for $(Z',W')\leq (Z,W)$, we get a canonical map $\textbf{L-SEQ}_{(Z,W)}\to \textbf{L-SEQ}_{(Z',W')}$. Indeed, this is achieved by first restricting the sequence $\textbf{L-SEQ}_{(Z,W)}$ to the open subset $X\setminus W'$ and then composing with \begin{equation*}
    \begin{tikzcd}
{\cdots} \arrow[r] & {H^{i}_{Z\setminus W'}(X\setminus W')} \arrow[r] \arrow[d] & {H^{i}(X\setminus W')} \arrow[r] \arrow[d, equal] & {H^{i}(X\setminus (W'\cup Z))} \arrow[r] \arrow[d] & {\cdots} \\
{\cdots} \arrow[r] & {H^{i}_{Z'\setminus W'}(X\setminus W')} \arrow[r]          & {H^{i}(X\setminus W')} \arrow[r]                                & {H^{i}(X\setminus Z')} \arrow[r]                   & {\cdots} .
\end{tikzcd}
\end{equation*} 
In particular, this gives a direct system over $\mathcal{I}$. Taking the direct limit over this index set, \eqref{eq:long-exact-seq-(Z,W)} yields \begin{equation}\label{eq:loc-coh-seq-F_{j}}\cdots\longrightarrow \bigoplus_{x\in X^{(j+1)}}H^{i}_{x}(X,\tau_{\geq s}K^{\bullet})\longrightarrow H^{i}(F_{j+1}X,\tau_{\geq s}K^{\bullet})\longrightarrow H^{i}(F_{j}X,\tau_{\geq s}K^{\bullet})\longrightarrow\cdots,\end{equation} where we put $H^{i}_{x}(X,\tau_{\geq s}K^{\bullet}):=\varinjlim H^{i}_{V\cap\overline{\{x\}}}(V,\tau_{\geq s}K^{\bullet})$ and the direct limit here runs through opens $V\subset X$ with $x\in V$.
\par 

The claim thus follows from \eqref{eq:loc-coh-seq-F_{j}} once we show that the groups $H^{i}_{x}(X,\tau_{\geq s}K^{\bullet})$ vanish for all $i\leq j+s$ and all points $x\in X^{(j+1)}$. By exactness of the direct limit functor, we obtain a hypercohomology spectral sequence $$E^{p,q}_{2}:=H^{p}_{x}(X,\mathcal{H}^{q}(\tau_{\geq s}K^{\bullet}))\implies \HH^{p+q}_{x}(X,\tau_{\geq s}K^{\bullet}).$$ We clearly have that $E^{p,q}_{2}=0$ for $q<s$. Thus it is enough to prove that \begin{equation}\label{eq:E2=0}H^{p}_{x}(X,\mathcal{H}^{q}(K^{\bullet}))=0\ \text{for}\ q\geq s,\ p+q=i\ \text{and}\ j\geq i-s.\end{equation} Indeed, let $c:=j+1$ and fix the notation $Z_{x}:=\overline{\{x\}}\subset X$ with $x\in X^{(c)}$. Note that by \eqref{eq:can-iso-coh}, we find a natural isomorphism \begin{equation}\label{eq:can-iso-H_{x}}
H^{p}_{x}(X,\mathcal{H}^{q}(K^{\bullet}))\cong H^{p-c}(\Gamma(F_{0}Z_{x},\mathcal{E}^{q+c,\bullet}_{Z_{x}})),\end{equation} where we set $H^{p}(\Gamma(F_{0}Z_{x},\mathcal{E}^{q+c,\bullet}_{Z_{x}})):=\varinjlim H^{p}(\Gamma(V,\mathcal{E}^{q+c,\bullet}_{Z_{x}}))$ and the direct limit runs through non-empty opens $V\subset Z_{x}$.\par

The inequalities in \eqref{eq:E2=0} guarantee that $p\leq j+s-q<c$. Since the complex $\mathcal{E}^{q+c,\bullet}_{Z_{x}}$ is concentrated in non-negative degrees, the canonical isomorphism \eqref{eq:can-iso-H_{x}} in turn implies the desired vanishing result \eqref{eq:E2=0}. The proof of Lemma \ref{lem:restriction-maps-F_{j}X} is finally complete.\end{proof}

\begin{corollary}\label{cor:restriction-maps-F_{j}X} For all integers $i,s,$ there is a canonical isomorphism \begin{equation*} \HH^{i}(X,\tau_{\geq s}K^{\bullet})\overset{\cong}{\longrightarrow} \HH^{i}_{i-s,nr}(X,\tau_{\geq s}K^{\bullet}).
\end{equation*}
\end{corollary}

\begin{proof} We apply repeatedly Lemma \ref{lem:restriction-maps-F_{j}X} until we reach an isomorphism $$\HH^{i}(F_{j}X,\tau_{\geq s}K^{\bullet})\overset{\cong}{\longrightarrow} \HH^{i}_{i-s,nr}(X,\tau_{\geq s}K^{\bullet})$$ with $j>\max\{\dim X,i-s\}$. The result now follows as $F_{j}X=X$ for $j\geq\dim X$.\end{proof}

\begin{lemma}\label{lem:vanishing-result} We have $\HH^{i}(F_{j}X,\tau_{\leq s}K^{\bullet})=0$ for all $j<i-s$.\end{lemma}

\begin{proof} By exactness of the direct limit functor, we get as before a hypercohomology spectral sequence $$E^{p,q}_{2}:=H^{p}(F_{j}X,\mathcal{H}^{q}(\tau_{\leq s}K^{\bullet}))\implies \HH^{p+q}(F_{j}X,\tau_{\leq s}K^{\bullet}).$$ 
Even though $\tau_{\leq s}K^{\bullet} $ may be unbounded to the left, the above spectral sequence converges as the Zariski topology on $X$ has finite cohomological dimension, see \cite[Th\'eor\`eme 3.6.5]{grothendieck}. It thus suffices to show that \begin{equation*}
    H^{p}(F_{j}X,\mathcal{H}^{q}(K^{\bullet}))=0\ \text{for}\ p+q=i,\ q\leq s\ \text{and}\ j<i-s.\end{equation*}
The two inequalities imply that $q<i-j$ and since $p=i-q$ we find $p>j$. The vanishing in question follows from the elementary observation that the complex $\Gamma(F_{j}X,\mathcal{E}^{q,\bullet}_{X})$ is concentrated in degrees $0\leq p\leq j$. Indeed, recall that the sheaf $\mathcal{E}^{q,p}_{X}$ is a direct sum of sheaves whose supports are closed subvarieties of $X$ of codimension $p$. Since forming the direct limit commutes with the direct sum, we obtain that $\Gamma(F_{j}X,\mathcal{E}^{q,p}_{X})=0$ for $p>j$, as claimed.\end{proof}

\begin{corollary}\label{cor:comparison-result} The natural map \begin{equation*}\HH^{i}(F_{j}X,K^{\bullet})\longrightarrow \HH^{i}(F_{j}X,\tau_{\geq s}K^{\bullet})\end{equation*} is an isomorphism if $j<i-s+1$ and surjective if $j=i-s+1$.\end{corollary}

\begin{proof} Consider the canonical exact triangle $$\tau_{\leq s-1}K^{\bullet}\longrightarrow K^{\bullet}\longrightarrow \tau_{\geq s}K^{\bullet}\overset{+1}{\longrightarrow}$$ of complexes of Zariski sheaves. This gives in turn an exact sequence $$\HH^{i}(F_{j}X,\tau_{\leq s-1}K^{\bullet})\longrightarrow \HH^{i}(F_{j}X,K^{\bullet})\longrightarrow \HH^{i}(F_{j}X,\tau_{\geq s}K^{\bullet})\longrightarrow \HH^{i+1}(F_{j}X,\tau_{\leq s-1}K^{\bullet}),$$ where by Lemma \ref{lem:vanishing-result} the first and the last group vanish when $j<i-s+1$ and $j<i+1-s+1$, respectively. The claim thus follows.\end{proof}

We are now in the position to conclude the proof of Theorem \ref{thm:main-general}. We have a commutative diagram \begin{equation*}\begin{tikzcd}
{\HH^{i}(F_{i-s+1}X,K^{\bullet})} \arrow[r, two heads] \arrow[d] & {\HH^{i}(F_{i-s+1}X,\tau_{\geq s}K^{\bullet})} \arrow[d] \\
{\HH^{i}(F_{i-s}X,K^{\bullet})} \arrow[r, "\cong"]               & {\HH^{i}(F_{i-s}X,\tau_{\geq s}K^{\bullet})},            
\end{tikzcd}\end{equation*} where all maps are the canonical ones and where by Corollary \ref{cor:comparison-result} the upper horizontal map is surjective and the lower one is an isomorphism. This yields an isomorphism \begin{equation*} \HH^{i}_{i-s,nr}(X,K^{\bullet})\cong \HH^{i}_{i-s,nr}(X,\tau_{\geq s}K^{\bullet}). 
\end{equation*}
The canonical isomorphism \eqref{eq:can-iso-general} is then given by the composition $$\HH^{i}(X,\tau_{\geq s}K^{\bullet})\overset{\cong}{\longrightarrow} \HH^{i}_{i-s,nr}(X,\tau_{\geq s}K^{\bullet})\cong \HH^{i}_{i-s,nr}(X,K^{\bullet}),$$ where the first map is an isomorphism by Corollary \ref{cor:restriction-maps-F_{j}X}. This completes the proof of Theorem \ref{thm:main-general}.
\end{proof}

\begin{proof}[Proof of Theorem \ref{thm:main}]
    We replace $\nu$ by the Zariski topology and $K^\bullet$ by the total pushforward $R\pi_\ast K^\bullet$.
    Then Theorem \ref{thm:main-general} applies and gives the result.
\end{proof}

\section{Proof of Theorem \ref{thm:main-intro}}
 \begin{proof}[Proof of Theorem \ref{thm:main-intro}]
Let $k$, $\nu$ and $K^\bullet\in D(X_\nu,\Z)$ be as in one of the examples in \eqref{item:1:etale-coho}--\eqref{item:7:etale-motivic-cohomology}. 
For a closed subset $Z\subset X$, we get cohomology theories with support by
$$
H^i_Z(X):=\R^i\Gamma_Z(X_\nu, K^\bullet),
$$
where $\Gamma_Z$ denotes the global section functor with support. 
If $j:V\hookrightarrow X$ is an open immersion, then we write 
$$
 H^i_{Z\cap V}(V):= \R^i\Gamma_{Z\cap V}(V_\nu, j^\ast K^\bullet) .
$$

If $U\subset X$ denotes the complement of $Z\subset X$, then there is a natural exact triangle $$\R \Gamma_Z(X_{\nu},-)\longrightarrow \R \Gamma (X_{\nu},-)\longrightarrow \R \Gamma(U_{\nu},-)\stackrel{+1}\longrightarrow$$ defined on $D(X_\nu,\Z)$.
In particular, we get residue maps $\del:H^{i}(U)\to H^{i+1}_Z(X)$.
This still works if we replace $X$ by some open subset $V\subset X$ and $K^\bullet$ by its restriction to $V$.
In particular, we get residue maps $\del:H^i(U\cap V)\to H^{i+1}_{Z\cap V}(V)$. 

For a codimension $j$ point $x\in X^{(j)}$ with closure $Z:=\overline{\{x\}}\subset X$, we may now define
$$
H^i_x(X):=\lim_{\substack{\longrightarrow \\ x\in V\subset X}} H^i_{Z\cap V}(V),
$$
where $V\subset X$ runs through all Zariski open subsets of $X$ that contain $x$.
If $V\subset V'\subset X$ are open subsets with $Z'=V'\setminus V$, then the aforementioned residue maps yield a map 
$$
H^i_{Z\cap V}(V)\longrightarrow H^i(V)\stackrel{\del}\longrightarrow H^{i+1}_{Z'}(V').
$$
Taking direct limits, we get maps
$$
\del:\bigoplus_{x\in X^{(j)}} H^i_x(X)\longrightarrow \bigoplus_{x\in X^{(j+1)}} H^{i+1}_x(X).
$$

For a codimension $j$ point $x\in X^{(j)}$, we define
$$
\A^{i+j}_x:=H^{i+j}_x(X)
$$ 
and we let $\iota_{x\ast}\underline{\A}^{i+j}_x$ be the sheaf on $X_{\zar}$ that is constant equal to $\A ^{i+j}_x$ on the closure of $x$ and zero outside of that closed subset.  
The above residue maps thus yield a complex
\begin{equation}\label{eq:flasque-resolution-2}
\mathcal{E}^{i,\bullet}_{X}:\ 
0\longrightarrow \bigoplus_{x\in X^{(0)}} \iota_{x\ast}\underline {\A}^{i}_{x}\longrightarrow \bigoplus_{x\in X^{(1)}} \iota_{x\ast}\underline {\A}^{i+1}_{x} \longrightarrow \bigoplus_{x\in X^{(2)}} \iota_{x\ast}\underline {\A}^{i+2}_{x} \longrightarrow \cdots \longrightarrow \bigoplus_{x\in X^{(r)}} \iota_{x\ast}\underline {\A}^{i+r}_{x}\longrightarrow \cdots,
\end{equation} 
as in \eqref{eq:flasque-resolution}.
There is a natural map of complexes $\R^i\pi_\ast K^\bullet\to \mathcal{E}^{i,\bullet}_{X}$, where we view $\R^i\pi_\ast K^\bullet$ to be concentrated in degree zero.

We need to show that $\R^i\pi_\ast K^\bullet\to \mathcal{E}^{i,\bullet}_{X}$ is a resolution.
It is not hard to see that this follows if one can show a certain effacement theorem, see \cite{quillen,BO} and especially \cite[Proposition 2.1.2 and Theorem 2.2.7]{CTHK}, which asserts that for any open subset $U\subset X$, any finite set $S\subset U$, and any closed subset $Z\subset U$ of pure codimension $j$, there is a closed subset $W\subset U$ with $Z\subset W$ and of pure codimension $j-1$, and a closed subset $Z'\subset W$ with $S\cap Z'=\emptyset$, such that the natural composition
$$ H^i_Z(U)\longrightarrow H^i_W(U)\longrightarrow H^i_{W\setminus Z'}(U\setminus Z') $$
is zero.
Equivalently, the image of $H^i_Z(U)\to H^i_W(U)$ is contained in the image of $H^i_{Z'}(U)\to H^i_W(U)$, where $Z'$ is disjoint from $S$ and hence in ``good'' position with respect to $S$.
In particular, the effacement theorem may be seen as a special case of a moving lemma for cohomology with support, see \cite{Sch-moving}. 

It remains to show that the Gersten conjecture, respectively an ``effacement theorem'', holds for the cohomology theories considered in \eqref{item:1:etale-coho}--\eqref{item:7:etale-motivic-cohomology}. 
This follows in the case of item \eqref{item:1:etale-coho} from \cite[7.4(1)]{CTHK}, in case of items \eqref{item:2:Betti-coho} and \eqref{item:4:algebraic-de-Rham} from \cite[7.3(2)--(3)]{CTHK}, in the cases of logarithmic de Rham Witt and de Rham Witt cohomology as in items \eqref{item:3:log-de-Rham-Witt} and \eqref{item:7:de-Rham-Witt}, it follows from \cite{gros-suwa} and \cite[7.4(3)]{CTHK}, and in the case of $\ell$-adic pro-\'etale cohomology it follows from \cite[Theorem 1.1 and Proposition 3.2(3)]{Sch-moving}.

Next we consider the case of Bloch's cycle complex tensored with an abelian group $A$ as in item \eqref{item:6:motivic-cohomology} and assume that the field $k$ is perfect. We let 
$$
\Z^{SF}(n):=\underline{C}_{\ast}(z_{equi}(\mathbb{A}^{n},0))[-2n]
$$ 
be the motivic complex of weight $n$ as defined by Friedlander and Suslin in \cite[Section 8]{F-S}. Here, $z_{equi}(X,0)$ denotes the \'etale sheaf of equi-dimensional cycles on $X$ of relative dimension $0$ on the category $Sm/k$ of smooth $k$-schemes as given in \cite[Chapter 4, $\S$2, pp. 141]{v-s-f}, whereas the complex $\underline{C}_{\ast}(F)$ assigned to a presheaf $F$ of abelian groups on $Sm/k$ is the singular simplicial complex of $F$, see \cite[Chapter 4, $\S 4$, pp. 150]{v-s-f}. In the notation of item \eqref{item:6:motivic-cohomology}, we have a canonical isomorphism $$\Z^{SF}(n)|_{X_{\zar}}\otimes A\cong A_{X}(n)_{\zar}$$ in the derived category of complexes of sheaves on the small Zariski site $X_{\zar}$, see \cite[Proposition 12.1]{F-S}. In particular, we obtain $$H^{i}_{M}(X,A(n))\cong H^{i}(X_{\zar},\Z^{SF}(n)\otimes A).$$ Note that the above tensor product coincides with the derived one, since the sheaves $\underline{C}_{\ast}(z_{equi}(\mathbb{A}^{n},0))$ are flat. It can be readily seen that $\Z^{SF}(n)\otimes A$ is the Zariski sheafification of the complex $$\underline{C}_{\ast}(z_{equi}(\mathbb{A}^{n},0)\otimes^{PrSh}A)[-2n],$$ where $\otimes^{PrSh}$ denotes the tensor product in the category of presheaves on $Sm/k$. The latter can be viewed as a complex of pretheories (\cite[Chapter 3, Definition 3.1]{v-s-f}) with homotopy invariant cohomology presheaves, see \cite[Chapter 4, $\S5$, Proposition 5.7]{v-s-f} and \cite[Chapter 3, $\S$3, Proposition 3.6]{v-s-f}. It follows from \cite[Chapter 3, $\S$4, Proposition 4.26]{v-s-f} that the Zariski cohomology sheaves of $\Z^{SF}(n)\otimes A$ are homotopy invariant pretheories over $k$ and thus their restriction to $X_{\zar}$ admits a Gersten resolution by \cite[Chapter3, $\S$4, Theorem 4.37]{v-s-f}, as required. 
\par

Lastly, the case of \'etale motivic cohomology as in item \eqref{item:7:etale-motivic-cohomology} (i.e.\ with coefficients in an abelian group $A$ in which the characteristic exponent $p$ is invertible) is essentially contained in \cite{cd}. For the reader's convenience we include some details of the argument. Let $\DM_{\h}(X,A)$ denote the category of $\h$-motives as defined in \cite[Definition 5.1.3]{cd}. The authors in \cite{cd} define \'etale motivic cohomology by $$H^{i}(X,n):=\Hom_{\DM_{\h}(X,A)}(A_{X}, A_{X}(n)[i]),$$ where $A_{X}$ is the identity object for $\otimes$ in $\DM_{\h}(X,A)$ and $A_{X}(1)$ is the Tate object. The above definition agrees with Lichtenbaum cohomology if the characteristic exponent $p$ of $k$ is invertible in $A$ and $X$ is a smooth and equi-dimensional algebraic $k$-scheme, see \cite[Theorem 7.1.2]{cd}. We show that the cohomology theory with supports \begin{equation}\label{eq:et-mot-supp} H^{i}_{Z}(X,n):= \Hom_{\DM_{\h}(Z,A)}(A_{Z}, \iota^{!}A_{X}(n)[i])\end{equation} satisfies the effacement theorem, where $\iota: Z\hookrightarrow X$ is a closed embedding of algebraic $k$-schemes. By \cite[Theorem 5.6.2]{cd}, the triangulated premotivic category $\DM_{\h}(-,A)$ satisfies the formalism of the Grothendieck $6$ functors for Noetherian schemes of finite dimension (\cite[Definition A.1.10]{cd}) along with the absolute purity property (\cite[Definition A.2.9]{cd}). This ensures as in the case of \'etale cohomology with finite coefficients \cite[Example (2.1)]{BO} that the axioms from \cite[Definition (1.1) $\&$ (1.2)]{BO} are satisfied and so the effacement theorem holds, as we want.
 
Altogether, this completes the proof of the theorem. 
\end{proof}
 
\section{Applications}

In this section we prove Corollaries \ref{cor:motivic}, \ref{cor:log-de-Rham-Witt}, \ref{cor:functoriality} and \ref{cor:action-correspondences}.

\subsection{Comparison to the cycle class map on higher Chow groups with finite coefficients}
 
\begin{proof}[Proof of Corollary \ref{cor:motivic}]
 Let $k$ be a perfect field and let $\pi:X_{\et}\to X_{\zar}$ be the natural map of sites from the \'etale site to the Zariski site of $X$ and let $m$ be an arbitrary integer.
Let $(\Z/m)_{X}(n)_{\zar}:=z^{n}(-_{\zar},\bullet)[-2n]\otimes^{\mathbb{L}}\Z/m$ denote Bloch's cycle complex from \cite{bloch-motivic} tensored with $\Z/m$ and let 
 $(\Z/m)_{X}(n)_{\et}$ be its \'etale sheafification.
 
By \cite{geisser-levine-inventiones,geisser-levine}, we have canonical isomorphisms in $D(X_{\et},\Z)$
\begin{align*} 
(\Z/m)_{X}(n)_{\et}\cong \begin{cases}
\mu_m^{\otimes n}\ \ \ \ &\text{if $m$ is coprime to $\operatorname{char}(k)$;}\\
 W_r\Omega_{X,\log}^n[-n]   \ \ \ \ &\text{if $m=p^r$ and $p=\operatorname{char}(k)>0$.}
\end{cases} 
\end{align*}
The higher direct images via $\pi$ of the sheaves/complexes on the right admit Gersten resolutions by \cite{BO,gros-suwa}.
It follows from the Chinese remainder theorem that the same holds for $\R^i\pi_\ast (\Z/m)_{X}(n)_{\et}$ and all $i,m$.
Hence the assumptions of Theorem \ref{thm:main} are satisfied for the complex $(\Z/m)_{X}(n)_{\et}$ and we get a canonical isomorphism 
\begin{equation} \label{eq:main-result-for-Lichtenbaum-complex}
\HH^{i}(X_{\zar},\tau_{\geq s}\R\pi_{\ast}(\Z/m)_{X}(n)_{\et})\cong \HH^{i}_{i-s,nr}(X_{\et},(\Z/m)_{X}(n)_{\et}).
\end{equation}  

The cohomology sheaves of the complex $(\Z/m)_{X}(n)_{\zar}$ vanish in all degrees greater than $n$. Indeed, from the Bockstein long exact sequence $$\cdots\longrightarrow\mathcal{H}^{i}(\Z(n)_{\zar})\overset{\times m}{\longrightarrow}\mathcal{H}^{i}(\Z(n)_{\zar})\longrightarrow \mathcal{H}^{i}(\Z/m(n)_{\zar})\longrightarrow\mathcal{H}^{i+1}(\Z(n)_{\zar})\overset{\times m}{\longrightarrow} \cdots,$$ we see that it is enough to show that $\mathcal{H}^{i}(\Z(n)_{\zar})=0$ for all $i>n$. The latter is implied by the Gersten conjecture \cite[Theorem 10.1]{bloch-motivic} and the fact that $\CH^{n}(\Spec k(x), 2n-i)=0$ for all $i>n$ and all points $x\in X$. Thus the canonical map $(\Z/m)_{X}(n)_{\zar}\to \R\pi_{\ast}(\Z/m)_{X}(n)_{\et}$ factors through the truncation $\tau_{\leq n }\R\pi_{\ast}\mu_{m}^{\otimes n}$. 
By the Beilinson-Lichtenbaum conjecture proven by Voevodsky \cite{Voe-milnor, Voevodsky}, we get in turn a natural quasi-isomorphism 
\begin{align} \label{eq:truncation-iso-Beilinson-Lichtenbaum-conj}
(\Z/m)_{X}(n)_{\zar}\cong \tau_{\leq n}\R\pi_{\ast}(\Z/m)_{X}(n)_{\et}.
\end{align}
Indeed, by the Chinese remainder theorem, it suffices to prove the above for $m$ a power of a prime number. In the case $m$ is coprime to the characteristic, the claim follows from \cite[Corollary 1.2]{geisser-levine} and the Bloch-Kato conjecture \cite{Voevodsky} (see also \cite[Theorem 6.6]{Voe-milnor}) whereas if $m$ is a power of the characteristic from \cite[Theorem 8.5]{geisser-levine-inventiones}.

We now consider the canonical exact triangle 
\begin{equation*}
\tau_{\leq n }\R\pi_{\ast}(\Z/m)_{X}(n)_{\et}\longrightarrow \R\pi_{\ast}(\Z/m)_{X}(n)_{\et}\longrightarrow \tau_{\geq n+1}\R\pi_{\ast}(\Z/m)_{X}(n)_{\et}\overset{+1}{\longrightarrow}
\end{equation*} 
in $D(X_{\zar},\Z)$.
The result follows then if  we apply $\R\Gamma(X_{\zar},-)$ to this triangle, consider the cohomology sequence of the resulting triangle and use \eqref{eq:main-result-for-Lichtenbaum-complex} and \eqref{eq:truncation-iso-Beilinson-Lichtenbaum-conj}.
This concludes the proof of the corollary.
\end{proof}

\subsection{Comparison to the cycle class map on higher Chow groups with integral coefficients}

\begin{proof}[Proof of Corollary \ref{cor:motivic-integral}] Let $\pi:X_{\et}\to X_{\zar}$ be the canonical map of sites. We have a natural exact triangle on $X_{\zar}$
\begin{equation}\label{eq:exact-triangle-trunc-Z}
    \tau_{\leq i+1}\R\pi_{\ast}\Z_{X}(i)_{\et}\longrightarrow \R\pi_{\ast}\Z_{X}(i)_{\et} \longrightarrow \tau_{\geq i+2}\R\pi_{\ast}\Z_{X}(i)_{\et} \stackrel{+1}\longrightarrow
\end{equation}
and we wish to show that the adjunction map $\Z_{X}(i)_{\zar}\to \R\pi_{\ast}\Z_{X}(i)_{\et}$ induces a quasi-isomorphism \begin{equation}\label{eq:quasi-iso_motivic}\Z_{X}(i)_{\zar}\cong \tau_{\leq i+1}\R\pi_{\ast}\Z_{X}(i)_{\et}.\end{equation} As in the proof of Corollary \ref{cor:motivic} the latter map factors through $\tau_{\leq i+1}\R\pi_{\ast}\Z_{X}(i)_{\et}$ simply because $\mathcal{H}^{j}(\Z_{X}(i)_{\zar})=0$ for all $j>i$. Next we consider the following commutative diagram of Zariski sheaves
\begin{equation*}
    \begin{tikzcd}
\cdots \arrow[r] & \mathcal{H}^{j-1}(\Q_{X}(i)_{\zar}) \arrow[r] \arrow[d, equal] & \mathcal{H}^{j-1}((\Q/\Z)_{X}(i)_{\zar}) \arrow[r] \arrow[d] & \mathcal{H}^{j}(\Z_{X}(i)_{\zar}) \arrow[r] \arrow[d] & \mathcal{H}^{j}(\Q_{X}(i)_{\zar}) \arrow[r] \arrow[d, equal] & \cdots \\
\cdots \arrow[r] & \R^{j-1}\pi_{\ast}\Q_{X}(i)_{\et} \arrow[r]                                  & \R^{j-1}\pi_{\ast}(\Q/\Z)_{X}(i)_{\et} \arrow[r]             & \R^{j}\pi_{\ast}\Z_{X}(i)_{\et} \arrow[r]             & \R^{j}\pi_{\ast}\Q_{X}(i)_{\et} \arrow[r]                                  & \cdots,
\end{tikzcd}
\end{equation*}
where we note that the horizontal sequences are exact. 
Since Lichtenbaum and motivic cohomology agree with rational coefficients, we find that $\mathcal{H}^{j}(\Q_{X}(i)_{\zar})\cong \R^{j}\pi_{\ast}\Q_{X}(i)_{\et}$ for all $j$. 
Moreover, the canonical maps $\mathcal{H}^{j}((\Q/\Z)_{X}(i)_{\zar})\to \R^{j}\pi_{\ast}(\Q/\Z)_{X}(i)_{\et}$ are isomorphisms as long as $i\geq j$ and injective if $j=i+1$. 
Indeed this is a formal consequence of the Beilinson-Lichtenbaum conjecture, as we will explain now for convenience of the reader.
The Chinese remainder theorem allows one to write $(\Q/\Z)_{X}(i)_{\nu}=\bigoplus_{\ell}(\Q_{\ell}/\Z_{\ell})_{X}(i)_{\nu},\ \nu\in\{\et,\zar\}$. 
Then, away from the characteristic, the claim follows from \cite[Corollary 1.2]{geisser-levine} and the Bloch-Kato conjecture proven by Voevodsky \cite{Voevodsky}.
Moreover, if $\ell=\Char(k)>0$, then the claim follows from \cite[Theorem 8.5]{geisser-levine-inventiones}. 
A simple diagram chase now gives that $\mathcal{H}^{j}(\Z_{X}(i)_{\zar})\cong \R^{j}\pi_{\ast}\Z_{X}(i)_{\et}$ for all $j\leq i+1,$ as we wanted.
\par 

Next we note that the isomorphism $\mathcal{H}^{j}(\Q_{X}(i)_{\zar})\cong \R^{j}\pi_{\ast}\Q_{X}(i)_{\et}$ implies that $\R^{j}\pi_{\ast}\Q_{X}(i)_{\et}=0$ for all $j>i$. Hence the bottom long exact sequence yields isomorphisms $\R^{j-1}\pi_{\ast}(\Q/\Z)_{X}(i)_{\et}\cong \R^{j}\pi_{\ast}\Z_{X}(i)_{\et}$ for all $j>i+1$. 
In particular, we obtain a quasi-isomorphism \begin{equation}
\label{eq:quasi-isom}\tau_{\geq i+2}\R\pi_{\ast}\Z_{X}(i)_{\et}\cong (\tau_{\geq i+1}\R\pi_{\ast}(\Q/\Z)_{X}(i)_{\et})[-1].
\end{equation}
The quasi-isomorphisms \eqref{eq:quasi-iso_motivic} and \eqref{eq:quasi-isom} imply in turn that the exact triangle \eqref{eq:exact-triangle-trunc-Z} can be rewritten as: 
$$\Z_{X}(i)_{\zar}\longrightarrow \R\pi_{\ast}\Z_{X}(i)_{\et} \longrightarrow (\tau_{\geq i+1}\R\pi_{\ast}(\Q/\Z)_{X}(i)_{\et})[-1] \stackrel{+1}\longrightarrow.$$ 
The result follows now if we apply $\R\Gamma(X_{\zar},-)$ to the latter triangle, consider the cohomology sequence of the resulting triangle and use the isomorphism $$H^{j-1}_{j-i-2}(X_{\et},(\Q/\Z)_{X}(i)_{\et} )\cong H^{j-1}(X_{\zar},\tau_{\geq i+1}\R\pi_{\ast}(\Q/\Z)_{X}(i)_{\et})$$ of Theorem \ref{thm:main-intro} for the examples \eqref{item:3:log-de-Rham-Witt} and \eqref{item:7:etale-motivic-cohomology}. \end{proof}

We note that the exact sequence of Corollary \ref{cor:motivic-integral} recovers \cite[Proposition
2.9 and Remarques 2.10 (2)]{kahn} if one sets $i=2$ and $i=3$ to the following sequence:

\begin{corollary}\label{cor:simple-version-motivic-integral} Let $X$ be a smooth and equi-dimensional algebraic scheme over a perfect field $k$. Then the sequence \eqref{eq:les-integral} gives the following exact sequence:
\begin{align*}\begin{split}
   & 0\longrightarrow H^{i+2}_{M}(X,\Z(i))\longrightarrow H^{i+2}_{L}(X,\Z(i))\longrightarrow H^{i+1}_{0,nr}(X_{\et},(\Q/\Z)_{X}(i)_{\et})\longrightarrow\cdots \\ 
   &\longrightarrow H^{2i}_{M}(X,\Z(i))\longrightarrow H^{2i}_{L}(X,\Z(i))\longrightarrow H^{2i-1}_{i-2,nr}(X_{\et},(\Q/\Z)_{X}(i)_{\et})\longrightarrow 0.
\end{split}
\end{align*}
\end{corollary}
\begin{proof} This is an immediate consequence of Corollary \ref{cor:motivic-integral} if one uses that $H^{j}_{m,nr}(X_{\et},(\Q/\Z)_{X}(i)_{\et})=0$ for $m<0$ as well as the vanishing of the motivic cohomology $H^{j}_{M}(X,\Z(i))$ for $j>2i$. The latter follows from the comparison between motivic cohomology and higher Chow groups (see \cite[Corollary 2]{voe}) where we also use that $\CH^{i}(X,m)=0$ for $m<0$.\end{proof}

\begin{corollary}\label{cor:consequence-integral-les} Let $X$ be a smooth and equi-dimensional algebraic scheme over a perfect field $k$. If either $j>2i-1$ or $j>i+\dim X-1$, then there is a natural isomorphism \begin{equation*}
 H^{j+1}_{L}(X,\Z(i))\cong H^{j}_{j-i-1,nr}(X_{\et},\Q/\Z(i)).
\end{equation*}
In particular, for $j=2i$, the above yields an isomorphism \begin{equation*}
    \Br^{i}(X)\cong H^{2i}_{i-1,nr}(X_{\et},(\Q/\Z)_{X}(i)_{\et}),
\end{equation*}
where the group $\Br^{i}(X):=H_{L}^{2i+1}(X,\Z(i))$ is the $i$-th higher Brauer group of $X$ (see e.g.\ \cite{RS-JIMJ}).
\end{corollary}
\begin{proof}[Proof of Corollary \ref{cor:consequence-integral-les}]
    By Corollary \ref{cor:motivic-integral} we find an exact sequence 
    \begin{equation*}
       \cdots\longrightarrow H^{j+1}_{M}(X,\Z(i))\longrightarrow H^{j+1}_{L}(X,\Z(i)) \longrightarrow H^{j}_{j-i-1}(X_{\et},(\Q/\Z)_{X}(i)_{\et})\to H^{j+2}_{M}(X,\Z(i)) \longrightarrow \cdots.
    \end{equation*}
   Thus to establish the desired isomorphisms, it is enough to show that $H^{j}_{M}(X,\Z(i))=0$ either if $j>2i$ or $j>i+\dim X$. Recall once again that $H^{j}_{M}(X,\Z(i))=\CH^{i}(X,2i-j)$ by \cite[Corollary 2]{voe}. Since $\CH^{i}(X,m)=0$ either if $m<0$ or $i>\dim X+m$, the vanishings in question follow. 
\end{proof}
\subsection{Refined unramified cohomology in the log de Rham Witt case}

\begin{proof}[Proof of Corollary \ref{cor:log-de-Rham-Witt}] By Theorem \ref{thm:main-intro}, we have a canonical isomorphism $$H^{i}_{j,nr}(X_{\et},(\Z/p^{r})_{X}(n))\cong H^{i}(X_{\zar},\tau_{\geq i-j}\R\pi_{\ast}(\Z/p^{r})_{X}(n)).$$ From \cite[Corollaire 1.5]{gros-suwa} we find that $\R^{q}\pi_{\ast}(\Z/p^{r})_{X}(n)=0$ for all $q\neq n, n+1$. This implies in turn that for $n\geq i-j$ the canonical map $\R\pi_{\ast}(\Z/p^{r})_{X}(n)\to\tau_{\geq i-j}\R\pi_{\ast}(\Z/p^{r})_{X}(n)$ is a quasi-isomorphism as well as for $n<i-j-1$ the truncated complex $\tau_{\geq i-j}\R\pi_{\ast}(\Z/p^{r})_{X}(n)$ is quasi-isomorphic to zero. Finally if $n=i-j-1,$ then the $(p,q)$ terms with $q\neq i-j$ of the spectral sequence $$E^{p,q}_{2}:=H^{p}(X_{\zar},\mathcal{H}^{q}(\tau_{\geq i-j}\R\pi_{\ast}(\Z/p^{r})_{X}(n)))\implies H^{p+q}(X_{\zar},\tau_{\geq i-j}\R\pi_{\ast}(\Z/p^{r})_{X}(n))$$ all vanish and thus $H^{i}_{j,nr}(X,(\Z/p^{r})_{X}(n))\cong H^{j}(X_{\zar}, \R^{i-j}\pi_{\ast}(\Z/p^{r})_{X}(n))$ as we want. This finishes the proof.\end{proof}

\begin{remark} The Leray spectral sequence associated to $\pi: X_{\et}\to X_{\zar}$, 
$$
E^{p,q}_{2}:=H^{p}(X_{\zar},\R^{q}\pi_{\ast}(\Z/p^{r})_{X}(n))\implies H^{p+q}(X,(\Z/p^{r})_{X}(n)),
$$ 
together with the fact that $\R^{q}\pi_{\ast}(\Z/p^{r})_{X}(n)=0$ for $q\neq n,n+1$ (see \cite[Corollaire 1.5]{gros-suwa}) yield an exact sequence \begin{align}\label{eq:seq-spectral}\begin{split}
 & 0\longrightarrow H^{1}(X_{\zar},\R^{n}\pi_{\ast}(\Z/p^{r})_{X}(n))\longrightarrow H^{n+1}(X,(\Z/p^{r})_{X}(n))\\ 
 & \longrightarrow H^{0}(X_{\zar}, \R^{n+1}\pi_{\ast}(\Z/p^{r})_{X}(n))\longrightarrow \cdots \longrightarrow H^{n}(X_{\zar}, \R^{n}\pi_{\ast}(\Z/p^{r})_{X}(n))\\
 & \longrightarrow H^{2n}(X,(\Z/p^{r})_{X}(n)) \longrightarrow H^{n-1}(X_{\zar},\R^{n+1}\pi_{\ast}(\Z/p^{r})_{X}(n))\longrightarrow 0.\end{split} \end{align} Since by \eqref{eq:truncation-iso-Beilinson-Lichtenbaum-conj} we also have $H^{i}_{M}(X,\Z/p^{r}(n))\cong H^{i-n}(X_{\zar}, \R^{n}\pi_{\ast}(\Z/p^{r})_{X}(n)),$ we find from Corollary \ref{cor:log-de-Rham-Witt} that the sequence \eqref{eq:seq-spectral} identifies with the one of Corollary \ref{cor:motivic}. 
\end{remark}

\subsection{Pullbacks for refined unramified cohomology}

\begin{proof}[Proof of Corollary \ref{cor:functoriality}]
Let $f:X\to Y$ be a morphism between equi-dimensional smooth algebraic $k$-schemes and let $k$, $\nu$ and $K_Y^\bullet\in D(Y_\nu,\Z)$ be as in one of the examples \eqref{item:1:etale-coho}--\eqref{item:7:etale-motivic-cohomology} above.  
Let $K_X^\bullet\in D(X_\nu,\Z)$ be the analogous complex on $X_\nu$.
We aim to construct functorial pullback maps 
$$
f^\ast:H^i_{j,nr}(Y_\nu ,K_Y^\bullet)\longrightarrow H^i_{j,nr}(X_\nu , K_X^\bullet).
$$
To this end, let us first construct a natural map $f^{\ast}:H^{i}_{j,nr}(Y_{\nu},K^{\bullet}_{Y})\to H^{i}_{j,nr}(X_{\nu},f^{\ast}K^{\bullet}_{Y})$. By Theorem \ref{thm:main-intro}, this is equivalent to establishing a natural map \begin{equation}\label{eq:f^{*}} f^{\ast}:H^{i}(Y_{\zar},\tau_{\geq i-j}\R\pi_{Y\ast}K^{\bullet}_{Y})\longrightarrow H^{i}(X_{\zar},\tau_{\geq i-j}\R\pi_{X\ast}f^{\ast}K^{\bullet}_{Y}),\end{equation} where $\pi_{Y}:Y_{\nu}\to Y_{\zar}$ (resp. $\pi_{X}:X_{\nu}\to X_{\zar}$) denotes the natural morphism of sites. The unit $\alpha_{f}:\id\to\R f_{\ast}f^{\ast}$ of the adjunction $(f^{\ast},f_{\ast})$ gives a map \begin{equation*}\alpha_{f}:\tau_{\geq i-j}\R\pi_{Y\ast}K^{\bullet}_{Y}\longrightarrow\R f_{\ast}f^{\ast}\tau_{\geq i-j}\R\pi_{Y\ast}K^{\bullet}_{Y}.\end{equation*} Since $f^{\ast}:Shv(Y_{\zar},\Z)\to Shv(X_{\zar},\Z)$ is an exact functor, it induces a functor on the derived level that we denote by the same symbol $f^{\ast}$ and which commutes with truncation. In particular, the target of $\alpha_{f}$ is $\R f_{\ast}\tau_{\geq i-j}f^{\ast}\R\pi_{Y\ast}K^{\bullet}_{Y}$. We exhibit a natural map $$\beta: f^{\ast}\R\pi_{Y\ast}K^{\bullet}_{Y}\longrightarrow \R\pi_{X\ast}f^{\ast}K^{\bullet}_{Y}.$$ By adjunction the latter is equivalent to constructing a map $$\beta^{\#}: \R\pi_{Y\ast}K^{\bullet}_{Y}\longrightarrow \R f_{\ast}\R\pi_{X\ast}f^{\ast}K^{\bullet}_{Y}.$$ Note that $\R f_{\ast}\R\pi_{X\ast}=\R (f\circ \pi_{X})_{\ast}=\R (\pi_{Y}\circ f)_{\ast}=\R\pi_{Y\ast}\R f_{\ast}$ and hence the target of $\beta^{\#}$ is really $\R\pi_{Y\ast} \R f_{\ast}f^{\ast}K^{\bullet}_{Y}$. To obtain the map $\beta^{\#}$, we simply apply now the derived functor $\R\pi_{Y\ast}$ to the natural map $K^{\bullet}_{Y}\to \R f_{\ast}f^{\ast}K^{\bullet}_{Y}$ obtained by evaluating the unit of the adjunction $(f^{\ast},f_{\ast})$ at $K^{\bullet}_{Y}$. 

\par 
    Consider the composite $$\tau_{\geq i-j}\R\pi_{Y\ast}K^{\bullet}_{Y}\overset{\alpha_{f}}{\longrightarrow}\R f_{\ast}\tau_{\geq i-j}f^{\ast}\R\pi_{Y\ast}K^{\bullet}_{Y}\overset{\R f_{\ast}\tau_{\geq i-j}\beta}\longrightarrow \R f_{\ast}\tau_{\geq i-j} \R\pi_{X\ast}f^{\ast}K^{\bullet}_{Y}.$$ We then obtain the map \eqref{eq:f^{*}} as desired by applying $\R^i\Gamma(Y_{\zar},-)$ to the above composition.
    
\par 
 Next, we need to show that there exists a natural map $f^{\ast}K^{\bullet}_{Y}\to K^{\bullet}_{X}$ for each one of the examples \eqref{item:1:etale-coho}--\eqref{item:7:etale-motivic-cohomology}. The examples \eqref{item:1:etale-coho} and \eqref{item:5:proetal} are of the same nature i.e. the complexes in question are given by $K^{\bullet}_{X}:=p_{X}^{\ast}K^{\bullet}$ and $K^{\bullet}_{Y}:=p_{Y}^{\ast}K^{\bullet},$ where $p_{X}: X\to\Spec k$ (resp. $p_{Y}: Y\to\Spec k$) is the structure morphism of $X$ (resp. $Y$) and where $K^{\bullet}\in D(\Spec(k)_{\nu})$. Since $f^{\ast}p_{Y}^{\ast}\cong p_{X}^{\ast}$, the claim follows. 
 For \eqref{item:2:Betti-coho}, note that any complex of constant sheaves of abelian groups in $D(X_{\an})$ is a pullback of a complex from $D(\Spec(\C)_{\an})=D(\text{Ab})$. 
 Thus the same reasoning as before implies the result. For \eqref{item:4:algebraic-de-Rham}, see \cite[\href{https://stacks.math.columbia.edu/tag/0FKL}{Tag 0FKL}]{stacks-project} and for \eqref{item:3:log-de-Rham-Witt} and \eqref{item:7:de-Rham-Witt}, see \cite[pp. 10, (1.2.2)]{gros} and \cite[pp. 548, (1.12.3)]{illusie}, respectively.\par

It remains to treat the case of items \eqref{item:6:motivic-cohomology} and \eqref{item:7:etale-motivic-cohomology}. 
As in the proof of Theorem \ref{thm:main-intro}, recall that by \cite[Proposition 12.1]{F-S}, there is a canonical quasi-isomorphism of complexes of Zariski sheaves on the small site of \'etale $Y$-schemes: $$\Z^{SF}(n)|_{Y_{\et}}\otimes A \cong A_{Y}(n),$$ where $\Z^{SF}(n)=\underline{C}_{\ast}(z_{equi}(\mathbb{A}^{n},0))[-2n]$ is the motivic complex of Friedlander and Suslin given in \cite[Section 8]{F-S}.  
As before, the presheaf $z_{equi}(X,0)$ on $Sm/k$ is the \'etale sheaf of equi-dimensional cycles on $X$ of relative dimension $0$ as defined in \cite[Chapter 4, $\S$2, pp. 141]{v-s-f}, whereas the complex $\underline{C}_{\ast}(F)$ associated to a presheaf $F$ of abelian groups on $Sm/k$ is the singular simplicial complex of $F$, see \cite[Chapter 4, $\S 4$, pp. 150]{v-s-f}. In particular, we can work with the motivic complex $\Z^{SF}(n)$ instead of Bloch's cycle complex. It is readily seen that by contravariant functoriality of $z_{equi}(X,0),$ we obtain maps $$\underline{C}_{\ast}(z_{equi}(\mathbb{A}^{n},0))|_{Y_{\nu}}\longrightarrow f_{\ast}\underline{C}_{\ast}(z_{equi}(\mathbb{A}^{n},0))|_{X_{\nu}}$$ and especially a natural map $\Z^{SF}(n)|_{Y_{\nu}}\to f_{\ast}\Z^{SF}(n)|_{X_{\nu}}$. The adjoint of the latter gives naturally what we want.

\par Lastly, note that we have a commutative diagram 
\begin{equation*}
\begin{tikzcd}
\R\pi_{Y\ast}K^{\bullet}_{Y} \arrow[r, "\alpha_{f}"] \arrow[d]      & \R f_{\ast}f^{\ast}\R\pi_{Y\ast}K^{\bullet}_{Y} \arrow[r, "\R f_{\ast}\beta"] \arrow[d]                     &  \R f_{\ast}\R\pi_{X\ast}f^{\ast}K^{\bullet}_{Y} \arrow[d] \arrow[r]      &  \R f_{\ast}\R\pi_{X\ast}K^{\bullet}_{X} \arrow[d]      \\
\tau_{\geq i-j}\R\pi_{Y\ast}K^{\bullet}_{Y} \arrow[r, "\alpha_{f}"] & \R f_{\ast}\tau_{\geq i-j}f^{\ast}\R\pi_{Y\ast}K^{\bullet}_{Y} \arrow[r, "\R f_{\ast}\tau_{\geq i-j}\beta"] & \R f_{\ast}\tau_{\geq i-j} \R\pi_{X\ast}f^{\ast}K^{\bullet}_{Y} \arrow[r] &  \R f_{\ast}\tau_{\geq i-j}\R\pi_{X\ast}K^{\bullet}_{X},
\end{tikzcd}
\end{equation*}
where the vertical maps are the canonical ones. Applying $\R\Gamma(Y_{\zar},-)$ to the above diagram and taking cohomology, we obtain a commutative diagram: \begin{equation*}
    \begin{tikzcd}
{H^{i}(Y_\nu ,K^{\bullet}_{Y})} \arrow[r, "f^{\ast}"] \arrow[d]                   & {H^{i}(X_\nu ,K^{\bullet}_{X})} \arrow[d]                   \\
{H^{i}(Y_\zar,\tau_{\geq i-j}\R\pi_{Y\ast}K^{\bullet}_{Y})} \arrow[r, "f^{\ast}"] & {H^{i}(X_\zar,\tau_{\geq i-j}\R\pi_{X\ast}K^{\bullet}_{X})}.
\end{tikzcd}
\end{equation*}
The vertical maps identify by Theorem \ref{thm:main-intro} with the natural maps $H^{i}(Y_\nu ,K^{\bullet}_{Y})\to H^{i}_{j,nr}(Y_\nu,K^{\bullet}_{Y})$ and $H^{i}(X_\nu,K^{\bullet}_{X})\to H^{i}_{j,nr}(X_\nu,K^{\bullet}_{X}),$ respectively.
This concludes the proof of Corollary \ref{cor:functoriality}, as we want.
\end{proof}

\subsection{Action of correspondences}
In this section we prove Corollary \ref{cor:action-correspondences}. Let $X$ be a smooth and equi-dimensional algebraic scheme over a perfect field $k$ of characteristic $p>0$. Recall first that by works of Berthelot (see \cite[Chapter VI, \S 3]{berthelot}) and  Gros (see \cite[D\'efinition 4.1.7]{gros}), there is a well-defined cycle class map 
\begin{equation}\label{eq:cycle-class-map-crystaline}
    \cl^{c}_{X}:\CH^{c}(X)\longrightarrow H^{2c}(X,W\Omega^{\bullet}_{X})
\end{equation}
that satisfies various natural compatibility properties. 

We start with some preliminary lemmas. 

\begin{lemma}\label{lem:action-cycles} Let $X$ be a smooth equi-dimensional algebraic scheme over a perfect field $k$ of characteristic $p>0$. 
Then for any pair of smooth and equi-dimensional algebraic $k$-schemes $X$ and $Y$, there is a natural bi-additive pairing 
\begin{equation}\label{eq:action-cycles}
    \times :\CH^{c}(X)\times H^{i}_{j,nr}(Y,W\Omega^{\bullet}_{Y})\longrightarrow H^{i+2c}_{j+c,nr}(X\times Y,W\Omega^{\bullet}_{X\times Y}),
\end{equation}
which is compatible with the pairing 
\begin{equation*}
    \CH^{c}(X)\times H^{i}(Y,W\Omega^{\bullet}_{Y})\longrightarrow H^{i+2c}(X\times Y,W\Omega^{\bullet}_{X\times Y}),\ [\Gamma]\otimes \alpha \mapsto p^{\ast}\cl^{c}_{X}([\Gamma])\cup q^{\ast} \alpha,
\end{equation*}
where $p:X\times Y\to X$ and $q: X\times Y\to Y$ are the canonical projections i.e. the following diagram commutes:\begin{equation*}
    \begin{tikzcd}
{\CH^{c}(X)\times H^{i}(Y,W\Omega^{\bullet}_{Y})} \arrow[r] \arrow[d] & {H^{i+2c}(X\times Y,W\Omega^{\bullet}_{X\times Y})} \arrow[d] \\
{\CH^{c}(X)\times H^{i}_{j,nr}(Y,W\Omega^{\bullet}_{Y})} \arrow[r]    & {H^{i+2c}_{j+c,nr}(X\times Y,W\Omega^{\bullet}_{X\times Y}).} 
\end{tikzcd}
\end{equation*}
\end{lemma}
\begin{proof} Let $\mathcal{Z}^{c}(X)$ denote the free abelian group generated by integral closed subschemes of $X$ of codimension $c$. As a first step, we construct a bi-additive pairing 
\begin{equation}
    \mathcal{Z}^{c}(X)\times  H^{i}(F_{j}Y,W\Omega^{\bullet}_{Y})\longrightarrow H^{i+2c}(F_{j+c}(X\times Y),W\Omega^{\bullet}_{X\times Y}).
\end{equation}
Pick a cycle $\Gamma\in\mathcal{Z}^{c}(X)$ and denote its support by $|\Gamma|:=\text{Supp}(\Gamma)\subset X$. Let $\alpha\in H^{i}(F_{j}Y,W\Omega^{\bullet}_{Y})$ be any class and choose a lift $\tilde{\alpha}\in H^{i}(U,W\Omega^{\bullet}_{Y})$, where $U\subset Y$ is open with complement $R:=Y\setminus U$ of codimension $\geq j+1$. Consider the cycle class $\cl_{|\Gamma|}(\Gamma)\in H^{2c}_{|\Gamma|}(X,W\Omega^{\bullet}_{X})$ and note that it behaves well with respect to pull-backs. Indeed, by \cite[D\'efinition 4.1.7]{gros}, it suffices to prove the compatibility for the cohomology class $\cl_{|\Gamma|}(\Gamma)\in H^{c}_{|\Gamma|}(X,W\Omega^{c}_{X,\text{log}})$. The natural isomorphism \cite[(4.1.6)]{gros}, allows us to remove the singular locus of $|\Gamma|$. Thus the compatibility property in question follows from the commutative diagram \cite[(3.5.20)]{gros}, once we use the identification \cite[(3.5.19)]{gros}. In particular, if $p:X\times Y\to X$ is the natural projection, then $p^{\ast}\cl_{|\Gamma|}(\Gamma)=\cl_{|\Gamma|\times Y}(p^{\ast}\Gamma)\in H^{2c}_{|\Gamma|\times Y}(X\times Y,W\Omega^{\bullet}_{X\times Y})$.\par

We consider the following composite
\begin{equation}\label{eq:composite-1}
    H^{i}(U,W\Omega^{\bullet}_{Y})\overset{\cl_{|\Gamma|\times U}(p^{\ast}\Gamma)\cup q^{\ast}}{\longrightarrow} H^{i+2c}_{|\Gamma|\times U} ((X\times Y)\setminus (|\Gamma|\times R),W\Omega^{\bullet}_{X\times Y})\overset{\iota_{\ast}}{\longrightarrow} H^{i+2c}((X\times Y)\setminus (|\Gamma|\times R),W\Omega^{\bullet}_{X\times Y}),
\end{equation} 
where for the middle term we implicitly use the isomorphism $$H^{i+2c}_{|\Gamma|\times U}(X\times U,W\Omega^{\bullet}_{X\times Y})\cong H^{i+2c}_{|\Gamma|\times U} ((X\times Y)\setminus (|\Gamma|\times R),W\Omega^{\bullet}_{X\times Y})$$ that follows from excision. The closed subset $|\Gamma|\times R\subset X\times Y$ has codimension $\geq j+c+1$ in $X\times Y$ and thus we can define $\Gamma \times \alpha\in H^{i+2c}(F_{j+c}(X\times Y),W\Omega^{\bullet}_{X\times Y})$ as the class represented by the image of $\tilde{\alpha}$ via \eqref{eq:composite-1}. Note that the class $\Gamma \times \alpha$ is clearly independent of our choice of a lift $\tilde{\alpha}$, as we can always shrink $U\subset X$ to an open subset $F_{j}Y\subset U'\subset U$ over which a given lift of $\alpha$ agrees with $\tilde{\alpha}|_{U'}$.\par 
By construction, the diagram \begin{equation*} \begin{tikzcd}
\mathcal{Z}^{c}(X)\times H^{i}(F_{j+1}Y,W\Omega^{\bullet}_{Y}) \arrow[d] \arrow[r] & H^{i}(F_{j+c+1}(X\times Y),W\Omega^{\bullet}_{X\times Y}) \arrow[d] \\
\mathcal{Z}^{c}(X)\times H^{i}(F_{j}Y,W\Omega^{\bullet}_{Y})               \arrow[r] & H^{i}(F_{j+c}(X\times Y),W\Omega^{\bullet}_{X\times Y})            
\end{tikzcd}
\end{equation*}
commutes, thus inducing a bi-additive pairing $$ \times :\mathcal{Z}^{c}(X)\times H^{i}_{j,nr}(Y,W\Omega^{\bullet}_{Y})\longrightarrow H^{i+2c}_{j+c,nr}(X\times Y,W\Omega^{\bullet}_{X\times Y}).$$\par

To get \eqref{eq:action-cycles}, it remains to show that the class $\Gamma\times\alpha$ vanishes if $\Gamma$ is rationally equivalent to zero. Indeed, in this case there exists an equi-dimensional closed subset $Z\subset X$ of codimension $c-1$, such that $|\Gamma|\subset Z$ and $\cl_{Z}(\Gamma)=0\in H^{2c}_{Z}(X,W\Omega^{\bullet}_{X})$. Let $\tilde{\alpha}\in H^{i}(V,W\Omega^{\bullet}_{Y})$ be a lift of $\alpha\in H^{i}_{j,nr}(Y,W\Omega^{\bullet}_{Y})$ over an open subset $V\subset Y$ whose complement $D:=Y\setminus V$ has codimension $\geq j+2$ and consider the composite 
\begin{equation}\label{eq:composite-2}
    H^{i}(V,W\Omega^{\bullet}_{Y})\overset{\cl_{Z\times V}(p^{\ast}\Gamma)\cup q^{\ast}}{\longrightarrow} H^{i+2c}_{Z\times V} ((X\times Y)\setminus (Z\times D),W\Omega^{\bullet}_{X\times Y})\overset{\iota_{\ast}}{\longrightarrow} H^{i+2c}((X\times Y)\setminus (Z\times D),W\Omega^{\bullet}_{X\times Y}),
\end{equation}
where as before we make use of the excision isomorphism $$H^{i+2c}_{Z\times V} ((X\times Y)\setminus (Z\times D),W\Omega^{\bullet}_{X\times Y})\cong H^{i+2c}_{Z\times V}(X\times V,W\Omega^{\bullet}_{X\times Y})$$ for the middle term.\par
Note that the closed subset $Z\times D\subset X\times Y$ has codimension $\geq j+c+1$ and it is readily seen that the image of $\tilde{\alpha}\in H^{i}(V,W\Omega^{\bullet}_{Y})$ via \eqref{eq:composite-2} gives a representative for the class $\Gamma\times \alpha$. Since the second map in \eqref{eq:composite-2} is zero ($\cl_{Z}(\Gamma)=0$), we find that $\Gamma\times \alpha=0$, as claimed. 
\end{proof}

\begin{lemma}\label{lem:pushforward} Let $k$ be a perfect field of characteristic $p>0$. Let $f:X\to Y$ be a proper morphism between smooth and equi-dimensional algebraic schemes over $k$ and set $r:=\dim Y-\dim X$. Then for any integers $i,j\geq 0,$ there is a natural pushforward map $f_{\ast}:H^{i}_{j,nr}(X,W\Omega^{\bullet}_{X})\to H^{i+2r}_{j+r,nr}(Y,W\Omega^{\bullet}_{Y})$ that is compatible with $f_{\ast}:H^{i}(X,W\Omega^{\bullet}_{X})\to H^{i+2r}(Y,W\Omega^{\bullet}_{Y})$ i.e. the following diagram commutes:
\begin{equation*}
\begin{tikzcd}
{H^{i}(X,W\Omega^{\bullet}_{X})} \arrow[r, "f_{\ast}"] \arrow[d] & {H^{i+2r}(Y,W\Omega^{\bullet}_{Y})} \arrow[d] \\
{H^{i}_{j,nr}(X,W\Omega^{\bullet}_{X})} \arrow[r, "f_{\ast}"]    & { H^{i+2r}_{j+r,nr}(Y,W\Omega^{\bullet}_{Y}).}
\end{tikzcd}
\end{equation*}
\end{lemma}
\begin{proof} It is enough to construct for $j\geq0$ natural maps $f_{\ast}:H^{i}(F_{j}X,W\Omega^{\bullet}_{X})\to H^{i+2r}(F_{j+r}Y,W\Omega^{\bullet}_{Y})$ that fit into a commutative diagram 
\begin{equation}\label{eq:compatibility}
\begin{tikzcd}
H^{i}(F_{j+1}X,W\Omega^{\bullet}_{X}) \arrow[r, "f_{\ast}"] \arrow[d] & H^{i+2r}(F_{j+1+r}Y,W\Omega^{\bullet}_{Y}) \arrow[d] \\
H^{i}(F_{j}X,W\Omega^{\bullet}_{X}) \arrow[r, "f_{\ast}"]             & H^{i+2r}(F_{j+r}Y,W\Omega^{\bullet}_{Y}). \end{tikzcd}
\end{equation}

By \cite[Definition 1.2.1]{gros}, there is a natural morphism 
\begin{equation}\label{eq:derived-pushforward} f_{\ast}:\R f_{\ast}W\Omega^{\bullet}_{X}\longrightarrow W\Omega^{\bullet}_{Y}(r)[r]\end{equation} 
in $D^{+}(X_{\zar},R)$, i.e.\ in the derived category of Zariski sheaves of graded modules over the Raynaud ring $R$, see \cite[(2.1)]{illusie-book}. Here $W\Omega^{\bullet}_{Y}(r)$ is the graded $R$-module deduced by the usual shift of degrees i.e. $(W\Omega^{\bullet}_{Y}(r))^{a}=W\Omega^{a+r}_{Y}$ and the change of the differential $d$ to $(-1)^{r}d$. There is a derived functor $\R\Gamma:D^{+}(X_{\zar},R)\to D^{+}(R)$ for the global section functor $\Gamma$. As we view complexes in $D^{+}(R)$ (resp. $D^{+}(X_{\zar},R)$) as double complexes whose rows correspond to graded $R$-modules, we also have a functor $\underline{s}:D^{+}(R)\to D^{+}(W)$ (resp. $\underline{s}:D^{+}(X_{\zar},R)\to D^{+}(X_{\zar},W)$) that maps a double complex to its total complex, see \cite[(2.1)]{illusie-book}.\par 

Applying $\underline{s}\circ\R\Gamma$ to \eqref{eq:derived-pushforward} gives $$\underline{s}\R\Gamma(X,W\Omega^{\bullet}_{X})\longrightarrow \underline{s}\R\Gamma(Y,W\Omega^{\bullet}_{Y})[2r],$$ where we implicitly utilise the isomorphism of total complexes $\underline{s}\R\Gamma(Y,W\Omega^{\bullet}_{Y})[2r]\cong \underline{s}\R\Gamma(Y,W\Omega^{\bullet}_{Y}(r)[r])$ defined degree-wise on the summands by $$(-1)^{pr}\id:\R\Gamma(Y,W\Omega^{p}_{Y})^{q}\longrightarrow \R\Gamma(Y,W\Omega^{p}_{Y})^{q}.$$ Thus taking cohomology in turn yields a natural map $f_{\ast}:H^{i}(X,W\Omega^{\bullet}_{X})\to H^{i+2r}(Y,W\Omega^{\bullet}_{Y})$ for all $i$.\par

Next pick a closed subset $Z\subset X$ with $\dim X-\dim Z\geq j+1$ and set $U:=X\setminus Z$. 
Then we have $\dim Y-\dim f(Z)\geq r+j+1$ and since the map $f:X\setminus f^{-1}(f(Z))\to X\setminus f(Z)$ is proper, the discussion above yields a homomorphism $$H^{i}(U,W\Omega^{\bullet}_{X})\longrightarrow H^{i}(X\setminus f^{-1}(f(Z)),W\Omega^{\bullet}_{X})\overset{f_{\ast}}\longrightarrow H^{i+2r}(Y\setminus f(Z),W\Omega^{\bullet}_{Y})\longrightarrow H^{i+2r}(F_{j+r}Y,W\Omega^{\bullet}_{Y}).$$ By taking the limit now over all opens $F_{j}X\subset U\subset X$, we deduce the desired map $$f_{\ast}:H^{i}(F_{j}X,W\Omega^{\bullet}_{X})\longrightarrow H^{i+2r}(F_{j+r}Y,W\Omega^{\bullet}_{Y}).$$ The compatibility property \eqref{eq:compatibility} for $f_{\ast}$ is clear by construction. This finishes the proof.\end{proof}

By combining Lemma \ref{lem:action-cycles} and Corollary \ref{cor:functoriality}, we are able to prove Corollary \ref{cor:action-correspondences} stated in the introduction.

\begin{proof}[Proof of Corollary \ref{cor:action-correspondences}]
 Let $k$ be a perfect field of characteristic $p>0$ and let $X$ and $Y$ be smooth, proper and equi-dimensional algebraic schemes over $k$.  
 We aim to construct a bi-additive pairing 
\begin{equation}\label{eq:action-correspondences-body}
    \CH^{c}(X\times Y)\times H^{i}_{j,nr}(X,W\Omega^{\bullet}_{X})\longrightarrow H^{i+2c-2d_{X}}_{j+c-d_{X},nr}(Y,W\Omega^{\bullet}_{Y}), \ \ \  ([\Gamma],\alpha)\mapsto [\Gamma]_{\ast}(\alpha),
\end{equation} 
where $d_{X}=\dim X$.
We define \eqref{eq:action-correspondences-body} as the composite 
\begin{equation*}
    \CH^{c}(X\times Y)\times H^{i}_{j,nr}(X,W\Omega^{\bullet}_{X})\overset{\Delta^{\ast}\circ\eqref{eq:action-cycles}\circ (\id \times p^{\ast})}{\longrightarrow} H^{i+2c}_{j+c,nr}(X\times Y,W\Omega^{\bullet}_{X\times Y})\overset{q_{\ast}}{\longrightarrow}H^{i+2c-2d_{X}}_{j+c-d_{X},nr}(Y,W\Omega^{\bullet}_{Y}),
\end{equation*}
where $p:X\times Y\to X, q:X\times Y\to Y$ are the natural projections and the morphism $\Delta: X\times Y\to (X\times Y)^{2}$ denotes the diagonal. 
Note that in the above composition the pullback maps $p^{\ast}$ and $\Delta^{\ast}$ are well-defined by Corollary \ref{cor:functoriality} and that $q_{\ast}$ is the pushforward map from Lemma \ref{lem:pushforward}.
Finally an adaptation of the argument in \cite[Corollary 6.8 (3)]{Sch-moving} implies that the construction \eqref{eq:action-correspondences} is compatible with respect to the composition of correspondences.
\end{proof}

 \section*{Acknowledgements} We thank Lin Zhou for useful comments and H\'el\`ene Esnault for a conversation. 
 We are grateful to the referee for his or her comments and to Jean-Louis Colliot-Th\'el\`ene for asking a question which led us to Corollary \ref{cor:motivic-integral}.
 This project has received funding from the European Research Council (ERC) under the European Union's Horizon 2020 research and innovation programme under grant agreement No 948066 (ERC-StG RationAlgic).


\end{document}